\documentclass[12pt,reqno]{amsart}

\title[]%{Commutator Estimates for Negative Fractional Laplacian on Bounded Domains and Applications to Electroconvection}
{Entropy Analysis of some Active Scalar Equations}

\author[E. Abdo]{Elie Abdo}
\address[E. Abdo]
{
    Department of Mathematics \\
    American University of Beirut \\
    Beirut 1107-2020\\
    Lebanon.
} 
\email{ea94@aub.edu.lb}

\usepackage{xfrac}

\usepackage[margin=1in]{geometry}
\usepackage{amsmath, amsthm, amssymb, yhmath, xfrac, mathtools}
\usepackage[most]{tcolorbox}
\usepackage[normalem]{ulem}

\usepackage{times}
\usepackage{color}
\usepackage{hyperref}
\newcommand{\pa}{\partial}
\newcommand{\la}{\label}

\newcommand{\na}{\nabla}
\newcommand{\be}{\begin{equation}}
\newcommand{\ee}{\end{equation}}
\newcommand{\ba}{\begin{array}{l}}
\newcommand{\ea}{\end{array}}

\newtheorem{thm}{Theorem}[section]
\newtheorem{prop}[thm]{Proposition}
\newtheorem{lemma}[thm]{Lemma}
\newtheorem{cor}[thm]{Corollary}

\usepackage{cite}

\usepackage{MnSymbol,wasysym}
\usepackage{accents}

\newcommand{\N}{\mathbb N}

\newcommand{\R}{\mathbb R}

\def\RR{{\mathbb R}}

\date{\today}
\begin{document}
\begin{abstract}
We study a broad family of active scalar equations with fractional dissipation on $\mathbb{R}^2$ and prove that their entropy diverges to $-\infty$ at a sharp rate of order $\log t$. The proof is based on a suitable regularization scheme, uniform entropy estimates, and a limiting argument via the Vitali convergence theorem. Our analysis introduces new fractional logarithmic Sobolev inequalities, weighted commutator estimates, and fractional moment bounds. These results provide a general framework for studying spatial decay and long-time dynamics of nonlocal nonlinear partial differential equations.
\end{abstract}

\keywords{Active scalar equations, Fractional dissipation, Entropy, Fractional logarithmic Sobolev inequalities, Fractional moment estimates, Weighted commutator estimates}

\noindent\thanks{\em{MSC Classification: 35R11, 35B40, 35Q35}}

\maketitle

\section{Introduction}
We consider a family of active scalar equations 
\be \label{activescalar}
\pa_t \theta + u \cdot \na \theta + \kappa \Lambda^{\alpha}\theta = 0
\ee on $\R^2$, with initial data $\theta(x,0) = \theta_0$ and with decay  at infinity. 
Here, the vector field $u:\mathbb{R}^2\to\mathbb{R}^2$ is given by either of the following two constitutive laws:

\noindent\textbf{(I) Linear velocity law.}
The velocity field is of the form $u=T(\theta)$, where $T$ is a linear operator satisfying the following properties:

\begin{enumerate}
\item[(i)] \textbf{Divergence-free condition:}
$\nabla\cdot T(\theta)=0$.

\item[(ii)] \textbf{$L^p$-boundedness:}
For every $p\in(1,\infty)$,
$\|T(\theta)\|_{L^p}\le C_p\|\theta\|_{L^p}$.

\item[(iii)] \textbf{Lipschitz continuity in $L^p$:}
For every $p\in(1,\infty)$,
$\|T(\theta_1)-T(\theta_2)\|_{L^p}
\le C_p\|\theta_1-\theta_2\|_{L^p}$.
\end{enumerate}

\noindent\textbf{(II) Bilinear velocity law.}
The velocity field is of the form $u=\tilde T(\theta,\theta)$, where $\tilde T$ is a bilinear operator satisfying the following properties: 

\begin{enumerate}
\item[(i)] \textbf{Divergence-free condition:}
$\nabla\cdot \tilde T(\theta,\theta)=0$.

\item[(ii)] \textbf{Bilinear $L^p$ estimate:}
For every $1<p,q,r<\infty$ satisfying $\frac1r=\frac1p+\frac1q$,
$\|\tilde T(\theta,\theta)\|_{L^r}
\le C_{p,q,r}\|\theta\|_{L^p}\|\theta\|_{L^q}$.

\item[(iii)] \textbf{Lipschitz stability estimate:}
For every $1<p,q,r<\infty$ satisfying $\frac1r=\frac1p+\frac1q$,
$\|\tilde T(\theta_1,\theta_1)-\tilde T(\theta_2,\theta_2)\|_{L^r}
\le C_{p,q,r}\|\theta_1-\theta_2\|_{L^p}
\bigl(\|\theta_1\|_{L^q}+\|\theta_2\|_{L^q}\bigr)$.
\end{enumerate}

The fractionally dissipative surface quasi-geostrophic (SQG) equation is an example of a velocity field of type {\bf{(I)}}, with $u=\mathcal{R}^{\perp}\theta$, where $\mathcal{R}$ denotes the Riesz transform \cite{constantin1999behavior}, whereas electroconvection in porous media provides an example of type ${\bf{(II)}}$, with $u=-\mathbb{P}(\theta \mathcal{R}\theta)$ and $\alpha = 1$, where $\mathbb{P}$ denotes the Leray projection onto the space of divergence-free vector fields \cite{AbdoIgnatova2023}.

In this paper, we investigate the long-time behavior of the relative entropy 
\be 
\mathcal{E}(t)=\int_{\mathbb{R}^2}\theta\log\theta\,dx
\ee associated with \eqref{activescalar}. The study of entropy is particularly natural in the context of active scalar equations, where the transport velocity is determined nonlocally by the scalar itself, leading to a delicate interplay between transport, nonlocal interactions, and dissipation. Such equations arise in a variety of important models that have been extensively studied in the literature, including the fractionally dissipative surface quasi-geostrophic (SQG) equation \cite{buckmaster2019nonuniqueness, constantin2001critical, constantin2016critical,  constantin2020estimates, constantin2018inviscid, constantin2023global, constantin1994formation, constantin2018local, constantin2015long, constantin2012nonlinear, constantin1999behavior, constantin2008regularity, ignatova2019construction, zelati2016global, kiselev2007global, stokols2020holder}, models of electroconvection in porous media \cite{AbdoIgnatova2023, Abdo2026Electroconvection}, and other active scalar systems \cite{abdo2024regularity, castro2009incompressible, dabkowski2014global, hassainia2021kam,  hmidi2017existence, isett2015holder, shvydkoy2011convex}. For several of these models, it has been shown that weak solutions converge to zero as $t\to\infty$ on the whole space \cite{constantin1999behavior}. However, convergence of the scalar to zero does not by itself capture the manner in which its mass spreads over the whole space. The entropy provides a complementary perspective on this phenomenon: its behavior quantifies the progressive spatial spreading and delocalization of the scalar. In this work, we show that, despite the convergence $\theta(t)\to0$, the relative entropy exhibits a strikingly different behavior, namely, $\mathcal{E}(t)\to-\infty$ as $t\to\infty$. More precisely, we establish the sharp logarithmic rate $\mathcal{E}(t)\sim -C\log t$, thereby providing a quantitative description of the large-time spreading of the scalar and revealing information about the asymptotic dynamics that is not captured by the pointwise or norm convergence $\theta(t)\to0$.

\subsection{Main theorems} We consider the regularized family of active-scalar equations 
\be \label{moll}
\pa_t \theta^{\epsilon} + J_{\epsilon}u^{\epsilon} \cdot \na \theta^{\epsilon} + \kappa \Lambda^{\alpha} \theta^{\epsilon} - \epsilon \Delta \theta^{\epsilon} = 0
\ee with initial data
\be \label{moll1}
\theta^{\epsilon}(x,0)  = J_{\epsilon} \theta_0
\ee and decay at infinity. Here $J_{\epsilon}\theta_0 = \rho_\epsilon * \theta_0$ is the standard mollifier operator defined as a convolution operator with $\rho_\varepsilon(x)
=
\frac{1}{\epsilon^2}
\rho\!\left(\frac{x}{\varepsilon}\right)$, where $\rho$ is a nonnegative, smooth and  compactly supported function, obeying $\int_{\mathbb{R}^2} \rho(x)\,dx = 1$. The existence of unique global smooth solutions to these $\epsilon$-systems is standard and follows from classical fixed-point arguments. We study the entropy behavior associated with these regularized systems:

\begin{thm} \label{t1}
Let $\alpha \in (0,2)$. Let $\eta \in \left(\alpha - \frac{1}{2}, \frac{\alpha}{2} \right) \cap (0,1)$ if $\alpha \in (0,1)$, and $\eta \in \left(\frac{\alpha}{2} - \frac{1}{2}, \frac{\alpha}{4} \right) \cap (0,1)$ if $\alpha \in [1,2)$. 
Suppose $\theta_0$ is positive on $\R^2$ and obeys the following regularity assumptions:
\be 
\theta_0 \in L^1 \cap L^{\infty}, \hspace{1cm}  (1+|x|^2)^{\frac{1}{2}+\eta} \theta_0 \in L^2(\R^2), \hspace{1cm} \left|\int_{\R^2} \theta_0 \log \theta_0 dx\right| <\infty.   
\ee Then the family of regularized solutions $\left\{\theta^{\epsilon} \right\}_{\epsilon > 0}$ to \eqref{moll}--\eqref{moll1} is nonnegative and is uniformly bounded in $\epsilon$ in the spaces 
\be 
L^{\infty}(0,\infty; L^p(\RR^2)) \cap L^2(0, \infty; \dot{H}^{\frac{\alpha}{2}}(\R^2))
\ee for any $p \ge 1$. Moreover, there are positive constants $\Gamma, c_1, \dots, c_6$ that depends only on the initial data and a positive integer $m$ (independent of $\epsilon$) such that 
\be 
\|(1+|x|^2)^{\frac{1}{2}+\eta} \theta^{\epsilon}(t)\|_{L^2}^2 \le \Gamma (1+t)^m
\ee for any $t \ge 0$ and $\epsilon \in (0,1)$, and 
\be 
-c_1 \log (c_2+c_3t) \le \mathcal{E}^{\epsilon}(t):= \int_{\R^2} \theta^{\epsilon} \log \theta^{\epsilon} dx \le -c_4 \log (c_5+c_6t)
\ee for any $t \ge 0$ and $\epsilon \in (0,1)$.
\end{thm}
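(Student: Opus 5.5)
The plan has three stages: uniform a priori bounds, a polynomial-in-time weighted moment bound, and then two-sided entropy bounds built from mass conservation, $L^p$ decay, and the second moment.

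\textbf{Stage 1: nonnegativity and uniform bounds.}
\begin{itemize}
\item Nonnegativity of $\theta^\epsilon$ follows from the maximum principle for the regularized drift–diffusion equation \eqref{moll}. The drift $J_\epsilon u^\epsilon$ is smooth and divergence-free, and $\kappa\Lambda^\alpha-\epsilon\Delta$ satisfies the pointwise C\'ordoba--C\'ordoba inequality. Since $J_\epsilon\theta_0\ge 0$ (indeed $>0$), the solution stays nonnegative.
\item Multiplying \eqref{moll} by $|\theta^\epsilon|^{p-2}\theta^\epsilon$ and using $\nabla\cdot J_\epsilon u^\epsilon=0$ together with the positivity lemma $\int|\theta|^{p-2}\theta\Lambda^\alpha\theta\ge 0$ gives $\|\theta^\epsilon(t)\|_{L^p}\le\|\theta_0\|_{L^p}$ for every $p\in[1,\infty]$.
\item Mass is exactly conserved: $\int\theta^\epsilon=\int\theta_0=:M$.
\item The $L^2$ energy identity gives $\kappa\int_0^\infty\|\Lambda^{\alpha/2}\theta^\epsilon\|_{L^2}^2\,dt\le\frac12\|\theta_0\|_{L^2}^2$. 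This proves the first claim.
\end{itemize}

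\textbf{Stage 2: the weighted moment bound.}
Set $w=(1+|x|^2)^{\frac12+\eta}$ and compute $\frac12\frac{d}{dt}\|w\theta^\epsilon\|_{L^2}^2$. Three terms appear.
\begin{itemize}
\item \emph{Transport term.} Integrating by parts, it becomes $\int (J_\epsilon u^\epsilon\cdot\nabla w)\,w(\theta^\epsilon)^2$. Since $|\nabla w|\lesssim w^{1-\frac{1}{1+2\eta}}$, this is bounded by $\|J_\epsilon u^\epsilon\|_{L^r}$ times weighted $L^p$ norms of $\theta^\epsilon$. These are controlled by interpolating between $\|w\theta^\epsilon\|_{L^2}$ and the uniform $L^p$ bounds from Stage 1. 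In case (II), $\|u^\epsilon\|_{L^r}$ is bounded by products of $L^p$ norms of $\theta^\epsilon$, hence uniformly in time.
\item \emph{Viscous term.} The $-\epsilon\Delta$ contribution produces $\epsilon\int|\nabla w|^2$-type errors bounded by $C\|w\theta^\epsilon\|_{L^2}^2$ plus terms integrable in time.
\item \emph{Fractional term.} I write $w\Lambda^\alpha\theta=\Lambda^{\alpha/2}(w\Lambda^{\alpha/2}\theta)+[\text{commutators}]$ and use the paper's weighted commutator estimates. These bound $|[\Lambda^{\alpha/2},w]f|$ by $\|w^{s}f\|$ with $s<1$, provided $\eta$ lies in the stated range. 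The ranges $\eta<\alpha/2$ (respectively $\eta<\alpha/4$) ensure that $\nabla^{\alpha}w$-type quantities are in the right $L^p$ spaces, and the lower bounds on $\eta$ make $w^{-1}\in L^{2}$-type integrability needed later.
\end{itemize}
After absorbing the dissipation, I aim for a differential inequality
\[
\frac{d}{dt}Y\le C\,Y^{\beta}+C, \qquad Y=\|w\theta^\epsilon\|_{L^2}^2,\ \beta<1,
\]
or $\frac{d}{dt}Y\le C(1+t)^{k}$, either of which integrates to $Y\le\Gamma(1+t)^m$. I expect this to be the main obstacle: making the commutator and transport terms sublinear in $Y$ with constants independent of $\epsilon$. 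I would first try to bound everything by $Y^{1/2}$ times the uniform norms.

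\textbf{Stage 3: entropy bounds.}
\begin{itemize}
\item \emph{Upper bound.} By Jensen and $\theta\log\theta\le\theta(\theta-1)$, or more sharply by $\int\theta\log\theta\le M\log(\|\theta\|_{L^2}^2/M)$, it suffices to have $L^2$ decay $\|\theta^\epsilon(t)\|_{L^2}^2\lesssim(c+t)^{-\gamma}$ uniformly in $\epsilon$. I obtain this by a Schonbek/Nash-type argument: combine the energy inequality with the fractional Nash inequality $\|\theta\|_{L^2}^{2+\alpha}\lesssim\|\Lambda^{\alpha/2}\theta\|_{L^2}^2\|\theta\|_{L^1}^{\alpha}$ (in $\mathbb{R}^2$) and conserved mass. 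This yields $\|\theta^\epsilon\|_{L^2}^2\lesssim(1+t)^{-2/\alpha}$, and hence $\mathcal{E}^\epsilon\le -c_4\log(c_5+c_6t)$.
\item \emph{Lower bound.} Use the Gibbs inequality against the Gaussian-type density $\phi\propto e^{-|x|^2/L}$: $\int\theta\log\theta\ge M\log M-\int\theta\log\phi$. Choosing $L$ to match the second moment gives $\mathcal{E}\ge -C-M\log\!\big(\int|x|^2\theta\big)$.
\item The second moment is controlled by Cauchy--Schwarz via $\int|x|^2\theta\le\|w\theta\|_{L^2}\,\|(1+|x|^2)^{\frac12-\eta}\|_{L^2}$, which requires $\eta$ large enough. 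Where that fails, I would use a fractional moment $|x|^{2s}$ with $s\le$ something, together with a fractional log-Sobolev-type Gibbs bound $\int\theta\log\theta\ge -C-\frac{M}{s}\log\int|x|^{2s}\theta$.
\item Combining this with $Y\le\Gamma(1+t)^m$ from Stage 2 gives $\mathcal{E}^\epsilon\ge -c_1\log(c_2+c_3t)$ uniformly in $\epsilon$.
\end{itemize}
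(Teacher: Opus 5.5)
\textbf{The gap is in Stage 2.} This stage is the heart of the theorem. The bound $\|(1+|x|^2)^{\frac12+\eta}\theta^\epsilon(t)\|_{L^2}^2\le\Gamma(1+t)^m$ is one of the claims, and it is also the only input to your lower entropy bound. Yet for the fractional term you only state a target inequality. You also ascribe to the commutator estimates a form they do not have. In \eqref{corre} and \eqref{prop15} ($b=w$ in the paper's notation), the commutator is controlled by the \emph{full} weighted norms ($\|wf\|_{L^2}$, and in \eqref{prop15} also $\|\Lambda^{\frac\alpha2}(wf)\|_{L^2}$) times a small factor $r$, plus negative powers of $r$ times unweighted norms. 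It is not controlled by $\|w^sf\|$ with $s<1$.

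More importantly, the mechanism you sketch does not deliver sublinearity. In $w\Lambda^\alpha\theta=\Lambda^{\frac\alpha2}(w\Lambda^{\frac\alpha2}\theta)+[w,\Lambda^{\frac\alpha2}]\Lambda^{\frac\alpha2}\theta$, the commutator acts on $\Lambda^{\frac\alpha2}\theta^\epsilon$, not on $\theta^\epsilon$. Any bound for it (Hardy--Littlewood--Sobolev lands in $L^q$ with $q<2$) needs a \emph{weighted} norm of $\Lambda^{\frac\alpha2}\theta^\epsilon$. That norm is neither uniformly bounded nor a power of $Y$.

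Returning to controlled quantities requires a second commutator. The natural way to do it, which is exactly Corollary \ref{coro}, produces $r\|\Lambda^{\frac\alpha2}(w\theta^\epsilon)\|_{L^2}\|w\theta^\epsilon\|_{L^2}$, i.e.\ $Cr^2Y$ after absorbing into the dissipation. With $r$ fixed this is linear in $Y$, and Gronwall only gives $e^{Ct}$, not $(1+t)^m$. You offer no other mechanism.

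Your viscous term has the same defect as written: an error $C\|w\theta^\epsilon\|_{L^2}^2$ again gives exponential growth. You need $|\nabla w|^2\le Cw^{2-\frac{2}{1+2\eta}}$ and interpolation against $\|\theta^\epsilon\|_{L^2}$ to make it sublinear. Your transport-term interpolation is fine.

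\textbf{How the paper closes it.} The paper keeps $r$ free and makes it time-dependent: $r(t)=(1+t)^{-\frac34}$, or $(1+t)^{-\frac32}$ in Step 4. Every term linear in $Y$ then carries the integrable coefficient $C(1+t)^{-\frac32}$, so the Gronwall factor stays bounded. The price $r^{-N}$ is a power of $1+t$ multiplying $\|\theta^\epsilon\|_{L^1}^2$, $\|\theta^\epsilon\|_{L^2}^2$, or the time-integrable $\|\Lambda^{\frac\alpha2}\theta^\epsilon\|_{L^2}^2$.

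\textbf{An alternative that keeps your sublinear strategy.} Avoid commutators acting on $\Lambda^{\frac\alpha2}\theta$ altogether. With $a=\sqrt{1+|x|^2}$ and $w=a^{1+2\eta}$, symmetrizing \eqref{fracint} gives
\[
\int_{\R^2}w^2\theta\,\Lambda^{\alpha}\theta\,dx=\|\Lambda^{\frac{\alpha}{2}}(w\theta)\|_{L^2}^2-\frac{c_\alpha}{2}\int_{\R^2}\int_{\R^2}\frac{(w(x)-w(y))^2}{|x-y|^{2+\alpha}}\,\theta(x)\theta(y)\,dx\,dy.
\]
Use $|w(x)-w(y)|\le C|x-y|\bigl(a(x)^{\eta}a(y)^{\eta}+|x-y|^{2\eta}\bigr)$ together with Hardy--Littlewood--Sobolev. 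This bounds the double integral by $C\|a^{2\eta}\theta\|_{L^{4/(4-\alpha)}}^2+C\|\theta\|_{L^{4/(4-\alpha+4\eta)}}^2\le C(1+Y^{\lambda})$ for some $\lambda<1$, whenever $4\eta<\alpha$. That condition holds throughout the paper's range for $\alpha\in[1,2)$. For $\alpha\in(0,1)$, a direct kernel bound on $\Lambda^\alpha(w\theta)-w\Lambda^\alpha\theta$ does the same job.

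\textbf{Stage 3 is correct and genuinely different from the paper.} For the upper bound, Nash plus Jensen replaces the functional $e^{-2\mathcal{E}^\epsilon/\|\theta_0\|_{L^1}}$, the fractional logarithmic Sobolev inequality and Stroock--Varopoulos. It even yields the better constant $\frac2\alpha\|\theta_0\|_{L^1}$ instead of $\frac12\|\theta_0\|_{L^1}$. For the lower bound, the Gibbs inequality replaces the splitting into $\{\theta^\epsilon<1\}$ and $\{\theta^\epsilon\ge1\}$. Fix the sign, which should read $\int\theta\log\theta\ge M\log M+\int\theta\log\phi$. Also take $s<\eta$, which is exactly what makes $\int|x|^{2s}\theta^\epsilon\,dx\le C\|w\theta^\epsilon\|_{L^2}$.
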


Passing to the limit as $\epsilon \rightarrow 0$, we obtain:

\begin{thm} \label{t2}
Let $\alpha \in (0,2)$. Let $\eta \in \left(\alpha - \frac{1}{2}, \frac{\alpha}{2} \right) \cap (0,1)$ if $\alpha \in (0,1)$, and $\eta \in \left(\frac{\alpha}{2} - \frac{1}{2}, \frac{\alpha}{4} \right) \cap (0,1)$ if $\alpha \in [1,2)$.
Suppose $\theta_0$ is positive on $\R^2$ and obeys the following regularity assumptions:
\be 
\theta_0 \in L^1 \cap L^{\infty}, \hspace{1cm}  (1+|x|^2)^{\frac{1}{2}+\eta} \theta_0 \in L^2(\R^2), \hspace{1cm} \left|\int_{\R^2} \theta_0 \log \theta_0 dx\right| <\infty.   
\ee  Then the family of active scalar equations \eqref{activescalar} has a weak solution $\theta$ that is nonnegative almost everywhere, and that satisfies the regularity criteria 
\be 
\theta \in L^{\infty}(0,\infty; L^p(\RR^2)) \cap L^2(0, \infty; \dot{H}^{\frac{\alpha}{2}}(\R^2))
\ee for any $p \ge 1$. Moreover, there are constants $c_1, \dots, c_6$ that depends only on the initial data such that 
\be 
-c_1 \log (c_2+c_3t) \le \mathcal{E}(t):= \int_{\R^2} \theta \log \theta dx \le -c_4 \log (c_5+c_6t)
\ee for any $t \ge 0$.
\end{thm}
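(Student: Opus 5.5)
The plan is to deduce Theorem \ref{t2} from Theorem \ref{t1} by compactness, and then to pass the two-sided entropy bounds to the limit.

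\textbf{Step 1: a limit solution.} By Theorem \ref{t1}, $\theta^\epsilon$ is bounded uniformly in $L^\infty(0,\infty;L^1\cap L^\infty)\cap L^2(0,\infty;\dot H^{\alpha/2})$. Using the equation \eqref{moll} together with the properties (I) or (II) of the velocity law, $\partial_t\theta^\epsilon$ is bounded in $L^2(0,T;H^{-s})$ for some large $s$. This holds because $J_\epsilon u^\epsilon\theta^\epsilon$ is bounded in $L^2$ by the $L^p$ bounds on $u^\epsilon$.

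An Aubin--Lions argument on balls $B_R$, combined with a diagonal extraction in $R$ and $T$, then gives a subsequence with $\theta^\epsilon\to\theta$ strongly in $L^2(0,T;L^2(B_R))$, hence a.e. in $\R^2\times(0,\infty)$. It also converges weakly-$*$ in the uniform spaces. By lower semicontinuity, $\theta$ inherits the bounds in $L^\infty L^p$ and $L^2\dot H^{\alpha/2}$, and nonnegativity passes to the a.e. limit.

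To pass to the limit in the nonlinear term, interpolate to get strong $L^2_tL^p_{loc}$ convergence for $p<\infty$. Then use the Lipschitz properties (iii) to show $u^\epsilon\to u$ in $L^2_tL^r$. In case (II) this uses the uniform $L^q$ bounds. Finally $J_\epsilon u^\epsilon\theta^\epsilon\to u\theta$ in $L^1_{loc}$. The terms $\epsilon\Delta\theta^\epsilon$ and $\Lambda^\alpha\theta^\epsilon$ pass to the limit weakly against test functions. The moment bound $\|(1+|x|^2)^{1/2+\eta}\theta^\epsilon\|_{L^2}^2\le\Gamma(1+t)^m$ survives by Fatou.

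\textbf{Step 2: the entropy bounds for $\theta$.} Fix $t$. I would first note that the compactness also holds for each fixed time, not just a.e. in time. Since $\partial_t\theta^\epsilon$ is bounded in $L^2H^{-s}$ and $\theta^\epsilon(t)$ is bounded in $L^2$, $\theta^\epsilon(t)\rightharpoonup\theta(t)$ weakly in $L^2$ for every $t$. Upgrading this to a.e. convergence at fixed $t$ is the delicate point. To get around it, I would pass to the limit in the time-averaged inequality
\[
-\int_{t_1}^{t_2}c_1\log(c_2+c_3s)\,ds\le\int_{t_1}^{t_2}\mathcal E^\epsilon(s)\,ds\le-\int_{t_1}^{t_2}c_4\log(c_5+c_6s)\,ds.
\]
This gives the bound for a.e. $t$, and then for every $t$ by choosing the representative, or by a weak continuity and lower semicontinuity argument.

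For the averaged convergence, use the Vitali convergence theorem on $\R^2\times(t_1,t_2)$. We have a.e. convergence of $\theta^\epsilon\log\theta^\epsilon$, so it remains to show uniform integrability and tightness. On the set $\{\theta^\epsilon\ge1\}$, the bound $|\theta\log\theta|\le C\theta^2$ together with $L^\infty$ bounds gives equi-integrability. On $\{\theta^\epsilon<1\}$, split $|\theta\log\theta|\le C\theta^{1-\delta}$. Then, with $w=(1+|x|^2)^{1/2+\eta}$,
\[
\int_{|x|>R}\theta^{1-\delta}\le\|w\theta\|_{L^2}^{1-\delta}\Big(\int_{|x|>R}w^{-\frac{2(1-\delta)}{1+\delta}}\Big)^{\frac{1+\delta}{2}}.
\]
Since $\eta>0$, one can pick $\delta$ small so that the weight integral converges and tends to $0$ as $R\to\infty$. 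The moment factor is uniform on $[t_1,t_2]$. This tightness step is the main obstacle: it requires the weight exponent to beat the $L^1$ borderline, which $\eta>0$ provides.

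Vitali then gives $\int_{t_1}^{t_2}\mathcal E^\epsilon\to\int_{t_1}^{t_2}\mathcal E$, and Lebesgue differentiation yields the bounds of Theorem \ref{t2} for a.e. $t$. Since the constants $c_1,\dots,c_6$ from Theorem \ref{t1} are independent of $\epsilon$, they carry over unchanged.
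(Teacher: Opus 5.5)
Your argument is correct and rests on the same ingredients as the paper: compactness of $\{\theta^\epsilon\}$, then Vitali's theorem, with tightness coming from the weighted moment bound. The execution differs in two worthwhile ways.

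First, the paper extracts a subsequence with $\theta^{\epsilon_k}(t)\to\theta(t)$ in $L^2(\R^2)$ for a.e.\ fixed $t$ and applies Vitali on $\R^2$ at that time. You instead apply Vitali on $\R^2\times(t_1,t_2)$ to time-integrated entropies and recover pointwise bounds by Lebesgue differentiation. This needs only local compactness plus a.e.\ convergence. Both routes give the bounds for a.e.\ $t$.

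Second, and more substantively, your tail estimate is the one that actually works. The paper bounds $\int_{|x|\ge M}|\theta^{\epsilon_k}\log\theta^{\epsilon_k}|\,dx$ via $|s\log s|\le Cs^2$, which fails as $s\to0$, i.e.\ exactly in the tail. Your estimate closes: you use $|s\log s|\le C_\delta s^{1-\delta}$ (valid on all of $[0,\|\theta_0\|_{L^\infty}]$), H\"older against $(1+|x|^2)^{\frac12+\eta}$, and $\delta<\frac{\eta}{1+\eta}$. This is the paper's own lower-bound computation with $p=1-\delta>\frac{1}{1+\eta}$. Likewise, equi-integrability follows from your uniform $L^\infty$ bound, not from boundedness of the entropies as the paper asserts.

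There are two points to tighten.

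Property (iii) is a global $L^p$ estimate, so local strong convergence is not enough to apply it, and this matters in case (II). Before using it, upgrade to convergence in $L^2(0,T;L^2(\R^2))$ via $\int_{|x|>R}(\theta^\epsilon)^2\,dx\le(1+R^2)^{-1-2\eta}\Gamma(1+T)^m$.

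For \emph{every} $t$, lower semicontinuity gives only the upper bound. For the weakly continuous representative, you have $\theta^\epsilon(t)\rightharpoonup\theta(t)$. Convexity of $s\log s$ on balls, together with your tail bound, then yields $\mathcal{E}(t)\le\liminf\mathcal{E}^\epsilon(t)$ at every $t$, with no Vitali needed. The lower bound at every $t$ should instead be obtained by rerunning Step~2 of Section~\ref{s4} directly on $\theta(t)$. That step uses only the mass, the $L^2$ norm and the moment bound at time $t$. All three survive the weak limit; mass conservation follows from the same kind of tail bound. The paper asserts the bounds for all $t$, but its argument, like yours, only delivers a.e.\ $t$.
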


\subsection{Methodology and challenges} For the inviscid active scalar equation, the entropy is conserved in time due to the transport structure of the model. The presence of fractional dissipation completely changes this behavior: the entropy is no longer conserved and, in fact, diverges to $-\infty$ at a sharp rate of order $\log t$. Establishing this phenomenon is highly nontrivial. Our approach begins with the construction of a suitable regularization scheme, for which we prove uniform entropy growth estimates, independent of the regularization parameter. We then pass to the limit and obtain weak solutions to the original family of active scalar equations. The entropy behavior of these weak solutions is recovered through a delicate application of the Vitali convergence theorem.

A fundamental preliminary step is to prove that nonnegative initial data remain nonnegative for all times. The main difficulty, however, lies in deriving sharp upper and lower bounds for the entropy. To obtain the upper bound, we study the evolution of the entropy functional $e^{-c\mathcal E}$, for a suitably chosen constant $c>0$. By establishing quantitative lower bounds for this functional, we derive the desired entropy estimates. The argument relies on a new fractional logarithmic Sobolev inequality on the whole space, which is proved in this work.

The derivation of the lower entropy bounds is considerably more challenging and is based on a careful analysis of fractional moments. Obtaining such moment estimates is delicate due to the nonlocal nature of the fractional dissipation. To overcome this difficulty, we construct commutators generated by interchanging fractional powers of the weight $|x|$ with fractional powers of the Laplacian and establish sharp commutator estimates when the dissipation exponent $\alpha\in(0,1)$. More generally, we develop doubly weighted commutator estimates associated with the interchange of the product of two H\"older continuous functions and fractional powers of the Laplacian. These estimates are derived through a refined application of the Hardy--Littlewood--Sobolev inequality.

For the regime $\alpha\in[1,2)$, we introduce a new decomposition method that effectively reduces the order of the fractional dissipation, transforming the problem into one involving exponents in the range $(0,1)$. This reduction allows us to analyze the resulting commutators using the techniques developed for the lower-order case. In addition, we investigate the commutativity properties between fractional powers of $|x|$ and mollification operators, which constitute a key ingredient in obtaining uniform estimates for the regularized system.

To the best of our knowledge, the techniques developed in this paper are entirely new. Beyond the specific active scalar models considered here, they provide a general framework for studying spatial decay and moment propagation for a broad class of nonlocal nonlinear partial differential equations.

\subsection{Organization of the paper} The paper is organized as follows. In Section~\ref{s2}, we derive all the commutator estimates needed in the sequel. In Section~\ref{s3}, we establish uniform fractional moment bounds. In Section~\ref{s4}, we prove Theorem~\ref{t1}, while in Section~\ref{s5}, we prove Theorem~\ref{t2}.

Throughout the paper, we denote by $C$ a positive universal constant that may depend on the parameters of the system, and this constant may change from line to line in the proofs.

\section{Weighted Commutator Estimates} \label{s2}

For $\alpha \in (0,2)$, the fractional Laplacian of order $\alpha$ on $\R^2$ can be represented by the integral representation formula
\be \la{fracint}
\Lambda^{\alpha} f(x) = c_{\alpha} P.V. \int_{\R^2} \frac{f(x) - f(y)}{|x-y|^{2+\alpha}} dy, 
\ee 
and its inverse power, the Riesz potential, is given by 
\be \la{rieszpotential}
\Lambda^{-\alpha} f(x) = C_{\alpha}\int_{\R^2} \frac{f(y)}{|x-y|^{2-\alpha}} dy.
\ee Here $c_{\alpha}$ and $C_{\alpha}$ are constants that depend only on $\alpha$. We drop the P.V. notation for simplicity.

The goal of this section is to derive weighted commutator estimates. We start by stating and proving a crucial interpolation inequality, which will be frequently used in this section:

\begin{lemma} Let 
\be 
\beta \in (0,1), \hspace{0.5cm} \delta > \max \left(0; \beta - \frac{1}{2} \right), \hspace{0.5cm} \gamma \in \left(\beta, \frac{1}{2}+\delta \right).
\ee  Let $f \in L^2$ such that $(\sqrt{1+|x|^2})^{\frac{1}{2}+\delta}f \in L^2$. Then there exists a positive constant $C$ depending on $\beta, \gamma, \delta$ such that 
\be \label{lem14}
\|\Lambda^{-\beta} f\|_{L^2} 
\le C_{\beta, \gamma, \delta} \|(\sqrt{1+|x|^2})^{\frac{1}{2}+\delta} f\|_{L^2}^{\frac{\beta}{\gamma}} \|f\|_{L^2}^{1-\frac{\beta}{\gamma}}.
\ee 
\end{lemma}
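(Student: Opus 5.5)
The plan is to work on the Fourier side. We split $\|\Lambda^{-\beta}f\|_{L^2}^2=\int_{\R^2}|\xi|^{-2\beta}|\hat f(\xi)|^2\,d\xi$ into low and high frequencies at a threshold $R>0$ and then optimize in $R$. Throughout, write $w(x)=(\sqrt{1+|x|^2})^{\frac12+\delta}$, $A=\|wf\|_{L^2}$ and $B=\|f\|_{L^2}$. Since $w\ge 1$, we have $B\le A$.

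\textbf{High frequencies.} On $\{|\xi|\ge R\}$ we use $|\xi|^{-2\beta}\le R^{-2\beta}$ and Plancherel:
\[
\int_{|\xi|\ge R}|\xi|^{-2\beta}|\hat f|^2\,d\xi\le R^{-2\beta}B^2 .
\]

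\textbf{Low frequencies.} The weight $w$ lets us place $f$ in an $L^p$ space with $p<2$, and Hausdorff--Young then controls $\hat f$ in $L^{p'}$.
\begin{enumerate}
\item[(a)] Set $s=2/\gamma$ and define $p$ by $\frac1p=\frac12+\frac1s=\frac{1+\gamma}{2}$.
\item[(b)] By H\"older, $\|f\|_{L^p}\le \|w^{-1}\|_{L^s}A$.
\item[(c)] The norm $\|w^{-1}\|_{L^s(\R^2)}$ is finite exactly when $s(\frac12+\delta)>2$, that is, $\gamma<\frac12+\delta$. This is the assumed upper bound on $\gamma$.
\item[(d)] Hausdorff--Young, valid when $1\le p\le 2$, gives $\|\hat f\|_{L^{p'}}\le C\|w^{-1}\|_{L^s}A$.
\item[(e)] Apply H\"older with exponents $p'/2$ and $r$, where $\frac1r=1-\frac2{p'}=\frac2p-1=\gamma$:
\[
\int_{|\xi|<R}|\xi|^{-2\beta}|\hat f|^2\,d\xi\le \|\hat f\|_{L^{p'}}^2\,\big\||\xi|^{-2\beta}\mathbf 1_{|\xi|<R}\big\|_{L^{1/\gamma}}.
\]
\item[(f)] The last factor is finite precisely when $2\beta/\gamma<2$, that is, $\gamma>\beta$. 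This is the assumed lower bound on $\gamma$. In that case it equals $C R^{2\gamma-2\beta}$.
\end{enumerate}
The two constraints $\beta<\gamma<\frac12+\delta$ are compatible exactly because $\delta>\beta-\frac12$. The low-frequency part is therefore bounded by $C R^{2\gamma-2\beta}A^2$.

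\textbf{Optimization.} Combining the two parts,
\[
\|\Lambda^{-\beta}f\|_{L^2}^2\le C R^{2\gamma-2\beta}A^2+R^{-2\beta}B^2 .
\]
Choose $R^{2\gamma}=B^2/A^2$ (assume $f\neq0$). Both terms become $A^{2\beta/\gamma}B^{2-2\beta/\gamma}$, and taking square roots gives \eqref{lem14}.

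\textbf{Main obstacle.} The delicate point is the exponent bookkeeping, in particular the requirement $p\ge1$ in step (d). This requirement is equivalent to $\gamma\le 1$, and it is automatic when $\delta\le\frac12$.

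If $\delta>\frac12$ and $\gamma>1$ is allowed, I would first try the endpoint $s=2$, $p=1$. Then $w^{-1}\in L^2$ and $\|\hat f\|_{L^\infty}\le CA$. However, this endpoint only gives the exponent $\beta$ in place of $\beta/\gamma$. Since $A\ge B$, the bound with the smaller exponent $\beta/\gamma$ is the stronger one, so it cannot be recovered from this endpoint. I would therefore check whether the lemma is only applied with $\gamma\le1$, which holds for the ranges of $\eta$ in Theorem~\ref{t1}. If so, I would state the proof for $\gamma\in(\beta,\min(1,\frac12+\delta))$.
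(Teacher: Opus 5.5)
For $\gamma\le 1$ your Fourier-splitting argument is correct. The exponents in (a)--(f) check out, and the choice $R^{2\gamma}=B^2/A^2$ gives exactly $A^{\beta/\gamma}B^{1-\beta/\gamma}$. This is a different route from the paper's, which never goes to frequency space. The paper uses Hardy--Littlewood--Sobolev to get $\|\Lambda^{-\beta}f\|_{L^2}\le C\|f\|_{L^{2/(1+\beta)}}$. It then interpolates $L^{2/(1+\beta)}$ between $L^{2/(1+\gamma)}$ and $L^2$ by log-convexity, with exponent $\beta/\gamma$. It finishes with the same weighted H\"older step as your (b)--(c). Your version replaces HLS by Hausdorff--Young and an explicit cutoff. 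The paper's chain has the advantage that no step needs $\frac{2}{1+\gamma}\ge 1$: HLS is only applied at $\frac{2}{1+\beta}\in(1,2)$.

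There is a genuine gap, though. The hypotheses allow $\delta>\frac12$ and $\gamma\in(1,\frac12+\delta)$, and there your step (d) fails. Restricting the lemma to $\gamma<\min(1,\frac12+\delta)$ does not prove the statement as given, even though it would suffice for Proposition \ref{double}, where $\eta<\frac12$.

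Your claim that the $p=1$ endpoint cannot recover the exponent $\beta/\gamma$ is also mistaken. The loss comes only from bounding $\|\hat f\|_{L^\infty}\le\|f\|_{L^1}\le CA$ outright. Instead, keep $\|f\|_{L^1}$. Your splitting with $R=B/\|f\|_{L^1}$ gives, for every $f\in L^1\cap L^2$,
\[
\|\Lambda^{-\beta}f\|_{L^2}\le C\|f\|_{L^1}^{\beta}B^{1-\beta}.
\]
Next, H\"older's inequality gives $\|f\|_{L^1}\le\|f\|_{L^{2/(1+\gamma)}}^{1/\gamma}B^{1-1/\gamma}$. This log-convexity holds even though $\frac{2}{1+\gamma}<1$ makes $L^{2/(1+\gamma)}$ only a quasi-norm. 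Your step (b) also remains valid for $p=\frac{2}{1+\gamma}<1$, because generalized H\"older does not require $p\ge1$, so $\|f\|_{L^{2/(1+\gamma)}}\le CA$ whenever $\gamma<\frac12+\delta$. Combining the three bounds gives
\[
\|\Lambda^{-\beta}f\|_{L^2}\le CA^{\beta/\gamma}B^{1-\beta/\gamma}
\]
for the full range $\gamma\in(\beta,\frac12+\delta)$, which closes the gap.
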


\begin{proof}
By the two-dimensional Hardy-Littlewood-Sobolev lemma, we have 
\be \label{lem11}
\|\Lambda^{-\beta}f\|_{L^2} 
\le C\|f\|_{L^{\frac{2}{1+\beta}}}
\ee for $\beta \in (0,1)$. Moreover, since $\frac{2}{1+\beta} \in (\frac{2}{1+\gamma} ,2)$, we can use the log-convexity of $L^p$ norms and interpolate to bound the latter by 
\be \label{lem12}
\|f\|_{L^{\frac{2}{1+\beta}}}
\le C\|f\|_{L^{\frac{2}{1+\gamma}}}^{\frac{\beta}{\gamma}} \|f\|_{L^2}^{1-\frac{\beta}{\gamma}}.
\ee 
In view of H\"older's inequality with exponents $1+\gamma$ and $\frac{\gamma + 1}{\gamma}$, it holds that 
\be 
\begin{aligned}
\|f\|_{L^{\frac{2}{1+\gamma}}}^{\frac{2}{1+\gamma}}
&= \int_{\R^2} \frac{|f|^{\frac{2}{1+\gamma}}(\sqrt{1+|x|^2})^{\frac{1+2\delta}{1+\gamma}}}{(\sqrt{1+|x|^2})^{\frac{1+2\delta}{1+\gamma}}}
dx
\\&\le \left(\int_{\R^2} (\sqrt{1+|x|^2})^{1+2\delta} |f|^2 dx \right)^{\frac{1}{1+\gamma}} \left(\int_{\R^2} \frac{1}{(\sqrt{1+|x|^2})^{\frac{1+2\delta}{\gamma}}} dx \right)^{\frac{\gamma}{\gamma+1}},
\end{aligned}
\ee which boils down to 
\be \label{lem13}
\|f\|_{L^{\frac{2}{1+\gamma}}} \le C\|(\sqrt{1+|x|^2})^{\frac{1}{2}+\delta} f\|_{L^2}
\ee due to the integrability of the rational  functions $(1+|x|)^{-q}$ on $\R^2$ for any $q > 2$. Indeed, in our case, we have 
\be 
\gamma < \frac{1}{2}+\delta \Rightarrow 2\gamma < 1+2\delta \Rightarrow \frac{1+2\delta}{\gamma} > 2. 
\ee Putting \eqref{lem11}, \eqref{lem12} and \eqref{lem13} together yields the desired estimate \eqref{lem14}.  
\end{proof}

Now we are ready to derive doubly weighted commutator estimates:

\begin{prop} \label{double} Let 
\be 
\alpha \in (0,1), \hspace{2cm} \eta \in \left(\alpha - \frac{1}{2}, \frac{\alpha}{2} \right) \cap (0,1).
\ee 
Let $\omega_1, \omega_2$ be two H\"older continuous functions of order $1/2 +\eta$. Let $f$ be a  scalar function obeying
\be 
f, \omega_1 f, \omega_2f, \left(\sqrt{1+|x|^2}\right)^{\frac{1}{2}+\eta}  \omega_1 f, \left(\sqrt{1+|x|^2}\right)^{\frac{1}{2}+\eta} \omega_2 f \in L^2, \hspace{0.5cm} f \in L^1. 
\ee  Then there is a positive real number $M$ depending on $\alpha$ and $\eta$ such that the following commutator estimate
\be \label{prop14}
\begin{aligned}
\|\Lambda^{\alpha}( \omega_1 \omega_2 f) - \omega_1 \omega_2 \Lambda^{\alpha}f\|_{L^2} 
&\le
r \left(\|f\|_{L^1} + \|\left(\sqrt{1+|x|^2}\right)^{\frac{1}{2}+\eta} \omega_1 f\|_{L^2} + \|\left(\sqrt{1+|x|^2}\right)^{\frac{1}{2}+\eta} \omega_2 f\|_{L^2}\right)
\\&\quad\quad+ C_{\alpha,  \eta, \omega_1, \omega_2} r^{-M} \left(\|f\|_{L^2} + \|\omega_1 f\|_{L^2} + \|\omega_2 f\|_{L^2}\right)
\end{aligned}
\ee holds for any $r > 0$. 
In particular, if $\omega_1(x) = \omega_2(x) = (\sqrt{1+|x|^2})^{\frac{1}{2}+\eta}$, then there is a positive real number $N$ that depends on $\alpha$ and $\eta$ such that 
\be \label{corre}
\begin{aligned}
&\|\Lambda^{\alpha}\left( \left(\sqrt{1+|x|^2}\right)^{1+2\eta} f\right) - \left(\sqrt{1+|x|^2}\right)^{1+2\eta}\Lambda^{\alpha} f\|_{L^2}
\\&\le
r \left(\|f\|_{L^1} + \|\left(\sqrt{1+|x|^2}\right)^{1+2\eta} f\|_{L^2} \right)
+ C_{\alpha,  \eta} r^{-N} \|f \|_{L^2}.
\end{aligned}
\ee holds for any $r > 0$. 
\end{prop}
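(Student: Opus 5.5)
We plan to start from the integral representation \eqref{fracint}. Writing $\omega=\omega_1\omega_2$, we have
\[
\Lambda^{\alpha}(\omega f)(x) - \omega(x)\Lambda^{\alpha} f(x) = c_\alpha \int_{\R^2} \frac{(\omega(x)-\omega(y)) f(y)}{|x-y|^{2+\alpha}}\,dy .
\]
We will then use the algebraic splitting
\[
\omega_1(x)\omega_2(x)-\omega_1(y)\omega_2(y) = \omega_1(x)\bigl(\omega_2(x)-\omega_2(y)\bigr) + \omega_2(y)\bigl(\omega_1(x)-\omega_1(y)\bigr).
\]
This splits the commutator into two terms. In each term one difference of a H\"older function appears, and it multiplies $f$ weighted by the other function. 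In the first term, $\omega_1(x)$ sits outside the integral. To make this symmetric, we instead write $\omega_1(x) = \omega_1(y) + (\omega_1(x)-\omega_1(y))$. This produces the terms $\int (\omega_2(x)-\omega_2(y))\,\omega_1 f(y)\,|x-y|^{-2-\alpha}dy$ and $\int(\omega_1(x)-\omega_1(y))\,\omega_2 f(y)\,|x-y|^{-2-\alpha}dy$. It also produces a cross term $\int (\omega_1(x)-\omega_1(y))(\omega_2(x)-\omega_2(y)) f(y)|x-y|^{-2-\alpha}dy$, where the kernel is bounded by $[\omega_1][\omega_2]|x-y|^{1+2\eta-2-\alpha}$.

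For each single-difference term, we will bound $|\omega_i(x)-\omega_i(y)|\le [\omega_i]_{1/2+\eta}|x-y|^{1/2+\eta}$. The kernel is then dominated by $|x-y|^{-2-\alpha+1/2+\eta}$. Since $\eta<\alpha/2<\alpha-1/2+\ldots$, we must check the sign of the exponent. We have $\tfrac12+\eta-\alpha\in(-\tfrac12,\tfrac12-\tfrac{\alpha}{2})$, using $\eta>\alpha-\tfrac12$ and $\eta<\alpha/2$. So the exponent $\beta:=\tfrac12+\eta-\alpha$ is positive and less than $1$. The kernel is then $|x-y|^{-(2-\beta)}$, i.e. a Riesz potential $\Lambda^{-\beta}$ of $|\omega_j f|$. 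Similarly, the cross term gives $\Lambda^{-\beta'}|f|$ with $\beta'=1+2\eta-\alpha\in(0,1)$, where $\beta'<1$ because $\eta<\alpha/2$.

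We will apply the interpolation Lemma \eqref{lem14} to $|\omega_j f|$ with $\delta=\eta$ and $\gamma\in(\beta,\tfrac12+\eta)$. This is admissible because $\beta<\tfrac12+\eta$ and $\eta>\max(0,\beta-\tfrac12)$. The result is $\|(1+|x|^2)^{(1/2+\eta)/2}\omega_j f\|_{L^2}^{\beta/\gamma}\|\omega_jf\|_{L^2}^{1-\beta/\gamma}$. Young's inequality $ab\le r a^{\gamma/\beta}+C r^{-M}b^{\cdots}$ then yields $r\|\cdot\|+Cr^{-M}\|\omega_j f\|_{L^2}$, with $M=\beta/(\gamma-\beta)$.

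The cross term $\Lambda^{-\beta'}|f|$ is the main obstacle, because $f$ carries no weight there. We will handle it with Hardy--Littlewood--Sobolev, $\|\Lambda^{-\beta'}f\|_{L^2}\le C\|f\|_{L^{2/(1+\beta')}}$, followed by interpolation between $L^1$ and $L^2$: $\|f\|_{L^{2/(1+\beta')}}\le \|f\|_{L^1}^{\beta'}\|f\|_{L^2}^{1-\beta'}$. Young's inequality then gives $r\|f\|_{L^1}+Cr^{-M'}\|f\|_{L^2}$. This is exactly why $\|f\|_{L^1}$ appears in \eqref{prop14}. Taking $M$ to be the maximum of the exponents (and absorbing constants by rescaling $r$, which is harmless since $r\mapsto r/C$ only changes $C$) gives \eqref{prop14}.

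For \eqref{corre}, we will note that $(1+|x|^2)^{(1/2+\eta)/2}$ is H\"older of order $\tfrac12+\eta<1$ globally. This holds because it is Lipschitz locally with gradient growing like $|x|^{\eta-1/2}$, and the standard concave-power argument applies. We then specialize $\omega_1=\omega_2$. The terms $\|\omega_j f\|_{L^2}$ will be bounded by interpolating $\|\omega f\|_{L^2}\le \|f\|_{L^2}^{1/2}\|\omega^2 f\|_{L^2}^{1/2}$, by Cauchy--Schwarz, and absorbed into $r\|\omega^2 f\|_{L^2}+Cr^{-2M-1}\|f\|_{L^2}$ via Young. This yields $N$.
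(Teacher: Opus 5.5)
Your proposal is correct and follows the paper's proof essentially step for step. It uses the same three-term splitting (two single-difference terms carrying $\omega_1 f$ and $\omega_2 f$, plus the cross term), the interpolation lemma with $\delta=\eta$ and $\beta=\tfrac12+\eta-\alpha$ for the single-difference terms, Hardy--Littlewood--Sobolev with $L^1$--$L^2$ interpolation and Young for the cross term, and the Cauchy--Schwarz bound $\|\omega f\|_{L^2}^2\le\|\omega^2 f\|_{L^2}\|f\|_{L^2}$ for \eqref{corre}.

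The remaining issues are minor slips. The exponent $\beta$ actually lies in $(0,\tfrac12-\tfrac{\alpha}{2})$, not $(-\tfrac12,\cdot)$, though your conclusion $\beta>0$ is right. The gradient of the weight decays like $|x|^{\eta-\frac12}$ rather than growing. Finally, replacing the different Young exponents by their maximum is literally valid only for $r\le 1$. That is the only regime used later, and the paper glosses over the same point. For all $r>0$ you can instead tune $\gamma$ so that the exponents coincide.
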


\begin{proof}
We denote by $\mathcal{C}$ the commutator 
\be 
\mathcal{C} = \Lambda^{\alpha}(\omega_1 \omega_2 f) - \omega_1 \omega_2 \Lambda^{\alpha}f.
\ee Using the integral representation formula of the fractional Laplacian \eqref{fracint},  we rewrite $\mathcal{C}$ as 
\be 
\mathcal{C}(x) = c_{\alpha} \int_{\R^2} \frac{\omega_1(x) \omega_2(x) - \omega_1(y) \omega_2(y)}{|x-y|^{2+\alpha}} f(y)dy. 
\ee We decompose $\mathcal{C}$ into the sum of three terms, $\mathcal{C}_1$, $\mathcal{C}_2$, and $\mathcal{C}_3$, where
\be 
\mathcal{C}_{1}(x) = c_{\alpha} \int_{\R^2} \frac{\omega_1(y) (\omega_2(x) -\omega_2(y)}{|x-y|^{2+{\alpha}}} f(y)dy,
\ee 
\be 
\mathcal{C}_{2}(x) = c_{\alpha} \int_{\R^2} \frac{\omega_2(y) (\omega_1(x) - \omega_1(y))}{|x-y|^{2+{\alpha}}} f(y)dy, 
\ee and 
\be 
\mathcal{C}_{3}(x) = c_{\alpha} \int_{\R^2} \frac{(\omega_1(x)- \omega_1(y)) (\omega_2(x) - \omega_2(y))}{|x-y|^{2+{\alpha}}} f(y)dy. 
\ee We now estimate $\mathcal{C}_{1}$,  $\mathcal{C}_{2}$, and $\mathcal{C}_3$ separately. In view of the H\"older continuity property 
\be 
|\omega_2(x) - \omega_2(y)| \le C_{\omega_2}|x-y|^{\frac{1}{2}+\eta} 
\ee that holds for all $x, y \in \R^2$, we have 
\be 
|\mathcal{C}_1(x)|
\le c_{\alpha}C_{\omega_2} \int_{\R^2} \frac{|\omega_1(y)f(y)|}{|x-y|^{\frac{3}{2}+\alpha-\eta}} dy = c_{\alpha}C_{\omega_2} C_{\alpha}^{-1}(\Lambda^{-\frac{1}{2}+\alpha - \eta} |\omega_1 f|)(x). 
\ee Since $\beta = \frac{1}{2} - \alpha + \eta \in (0,1)$ and $\eta > \max \left(0; \beta - \frac{1}{2}\right)$, we can apply  the interpolation inequality \eqref{lem14} and further bound $\mathcal{C}_1$ in $L^2$ as follows,
\be 
\|\mathcal{C}_1\|_{L^2} 
\le C_{\eta, \alpha, \omega_2}  \|(\sqrt{1+|x|^2})^{\frac{1}{2}+\eta} \omega_1 f\|_{L^2}^{\frac{\beta}{\gamma}} \|\omega_1 f\|_{L^2}^{1-\frac{\beta}{\gamma}},
\ee where $\gamma \in \left(\beta, \frac{1}{2}+\eta \right)$. Due to Young's inequality for products, the latter reduces to 
\be \label{prop31}
\|\mathcal{C}_1\|_{L^2} 
\le  r \|(\sqrt{1+|x|^2})^{\frac{1}{2}+\eta} \omega_1 f\|_{L^2} + C_{\eta, \alpha, \omega_2}r^{-\frac{\beta}{\gamma - \beta}} \|\omega_1 f\|_{L^2}.
\ee As for $\mathcal{C}_2$, it has the same structure as $\mathcal{C}_1$ and can thus be estimated similarly, yielding
\be \label{prop32}
\|\mathcal{C}_2\|_{L^2} 
\le  r \|(\sqrt{1+|x|^2})^{\frac{1}{2}+\eta} \omega_1 f\|_{L^2} + C_{\eta, \alpha, \omega_2}r^{-\frac{\beta}{\gamma - \beta}} \|\omega_2 f\|_{L^2}.
\ee Finally, in order to estimate $\mathcal{C}_3$, we exploit the H\"older continuity property obeyed by $\omega_1$ and $\omega_2$ to bound it by 
\be 
|\mathcal{C}_3| \le c_{\alpha} C_{\omega_1} C_{ \omega_2} \int_{\R^2} \frac{|f(y)|}{|x-y|^{1+{\alpha}-2\eta}} dy =  c_{\alpha} C_{\omega_1} C_{ \omega_2} C_{\alpha}^{-1} (\Lambda^{-1+\alpha -2\eta} |f|)(x) ,
\ee which, after applying the Hardy-Littlewood-Sobolev inequality, interpolating, and making use of Young's inequality, yields  the $L^2$ bound
\be \label{prop33} 
\|\mathcal{C}_3\|_{L^2} \le  C_{\alpha, \omega_1, \omega_2} \|f\|_{\frac{2}{2-\alpha+2\eta}}
\le r \|f\|_{L^1} + C_{\alpha, \omega_1, \omega_2}r^{-{\frac{1-\alpha + 2\eta}{\alpha - 2\eta}}} \|f\|_{L^2} 
\ee for any $r > 0$.
Putting \eqref{prop31}, \eqref{prop32}, and \eqref{prop33} together yields the desired bound \eqref{prop14}. As for \eqref{corre}, it follows from \eqref{prop14} with $\omega_1 = \omega_2 = \left(\sqrt{1+|x|^2}\right)^{\frac{1}{2} + \eta}$ and the Cauchy-Schwarz type inequality
\be 
\begin{aligned}
\|\left(\sqrt{1+|x|^2}\right)^{\frac{1}{2} + \eta}f\|_{L^2}^2 
&\le \|\left(\sqrt{1+|x|^2}\right)^{1 + 2\eta}f\|_{L^2} \|f \|_{L^2} 
\\&\le r^2\|\left(\sqrt{1+|x|^2}\right)^{1 + 2\eta}f\|_{L^2}^2 + Cr^{-2}\|f\|_{L^2}^2.
\end{aligned}
\ee 
\end{proof}

As a consequence, we obtain the following corollary covering the $\alpha$-range $[1,2)$:

\begin{cor} \label{coro}
Let 
\be 
\alpha \in (0,2), \hspace{1cm} \eta \in \left(\frac{\alpha}{2} - \frac{1}{2}, \frac{\alpha}{4} \right) \cap (0,1), \hspace{1cm} b(x) = \left(\sqrt{1+|x|^2} \right)^{1+2\eta}.
\ee 
Let $f$ be a  scalar function obeying
\be 
f, bf, \Lambda^{\frac{\alpha}{2}}f, \Lambda^{\frac{\alpha}{2}}(bf) \in L^2, \hspace{0.5cm} f \in L^1. \ee  Then there is a positive constant $N$ that depends on $\alpha$ and $\eta$ such that the following commutator estimate
\be \label{prop15}
\begin{aligned}
&\|\Lambda^{\frac{\alpha}{2}}\left(b \Lambda^{\frac{\alpha}{2}}f\right)  - b\Lambda^{\frac{\alpha}{2}} (\Lambda^{\frac{\alpha}{2}} f)\|_{L^2}
\\&\quad\quad\le C_{\eta}r^2\left(\|f\|_{L^1} + \|bf\|_{L^2}\right) + C_{\alpha,\eta}r^{1-N}\|f\|_{L^2} + C_{\eta}r\|\Lambda^{\frac{\alpha}{2}} (bf)\|_{L^2} + C_{\alpha, \eta}r^{-N}\|\Lambda^{\frac{\alpha}{2}}f\|_{L^2}
\end{aligned}
\ee holds for any $r > 0$. 
\end{cor}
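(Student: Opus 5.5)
The plan is to apply the proof of Proposition \ref{double} with $\alpha$ replaced by $\frac{\alpha}{2}$, with $f$ replaced by $g:=\Lambda^{\frac{\alpha}{2}}f$, and with $\omega_1=\omega_2=\left(\sqrt{1+|x|^2}\right)^{\frac12+\eta}$, so that $\omega_1\omega_2=b$. The hypothesis $\eta\in\left(\frac{\alpha}{2}-\frac12,\frac{\alpha}{4}\right)\cap(0,1)$ is exactly the condition $\eta\in\left(\alpha'-\frac12,\frac{\alpha'}{2}\right)\cap(0,1)$ of Proposition \ref{double} with $\alpha'=\frac{\alpha}{2}\in(0,1)$. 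The weights are H\"older continuous of order $\frac12+\eta$, since $\frac12+\eta<1$ and they grow like $|x|^{\frac12+\eta}$ with bounded gradient decay. So the splitting $\mathcal{C}=\mathcal{C}_1+\mathcal{C}_2+\mathcal{C}_3$ carries over verbatim.

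The one point where I cannot quote \eqref{corre} directly is that it involves $\|g\|_{L^1}=\|\Lambda^{\frac{\alpha}{2}}f\|_{L^1}$, which is not controlled. This arises only from $\mathcal{C}_3$, where we need $\|g\|_{L^p}$ with $p=\frac{2}{2-\alpha'+2\eta}<2$. I will replace the $L^1$ interpolation by a weighted H\"older step. Write $\frac1p=\frac12+\frac1q$ with $q=\frac{2}{\alpha'-2\eta}$. Then
\[
\|g\|_{L^p}\le \|b^{s}g\|_{L^2}\,\|b^{-s}\|_{L^q},
\]
where $s\in(0,1)$ is chosen so that $s(1+2\eta)q>2$, which makes $b^{-s}\in L^q$. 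Interpolating, $\|b^sg\|_{L^2}\le\|bg\|_{L^2}^s\|g\|_{L^2}^{1-s}$, and Young's inequality gives $r\|bg\|_{L^2}+Cr^{-N'}\|g\|_{L^2}$. For $\mathcal{C}_1$ and $\mathcal{C}_2$, the weighted norms $\|(\sqrt{1+|x|^2})^{\frac12+\eta}\omega_i g\|_{L^2}=\|bg\|_{L^2}$ appear directly. The lower-order terms $\|\omega_i g\|_{L^2}$ are handled by the Cauchy--Schwarz step at the end of the proof of Proposition \ref{double}. Altogether I expect
\[
\|\Lambda^{\frac{\alpha}{2}}(bg)-b\Lambda^{\frac{\alpha}{2}}g\|_{L^2}\le C_\eta r\|bg\|_{L^2}+C_{\alpha,\eta}r^{-N}\|g\|_{L^2}.
\]

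Next I will control $\|bg\|_{L^2}=\|b\Lambda^{\frac{\alpha}{2}}f\|_{L^2}$ by commuting once more:
\[
\|b\Lambda^{\frac{\alpha}{2}}f\|_{L^2}\le\|\Lambda^{\frac{\alpha}{2}}(bf)\|_{L^2}+\|\Lambda^{\frac{\alpha}{2}}(bf)-b\Lambda^{\frac{\alpha}{2}}f\|_{L^2}.
\]
The last commutator is estimated by \eqref{corre}, again with $\alpha$ replaced by $\frac{\alpha}{2}$ and applied to $f$ itself, where $\|f\|_{L^1}$ is available. This gives $r(\|f\|_{L^1}+\|bf\|_{L^2})+C r^{-N}\|f\|_{L^2}$. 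Multiplying by the factor $r$ in front of $\|bg\|_{L^2}$ produces exactly the terms $C_\eta r^2(\|f\|_{L^1}+\|bf\|_{L^2})$, $C_{\alpha,\eta}r^{1-N}\|f\|_{L^2}$ and $C_\eta r\|\Lambda^{\frac{\alpha}{2}}(bf)\|_{L^2}$. The remaining term $r^{-N}\|g\|_{L^2}=r^{-N}\|\Lambda^{\frac{\alpha}{2}}f\|_{L^2}$ is the last term of \eqref{prop15}. Taking the maximum of the exponents so that a single $N$ appears in both places then yields \eqref{prop15}.

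The main obstacle is the $\mathcal{C}_3$ term, specifically avoiding any $L^1$ norm of $\Lambda^{\frac{\alpha}{2}}f$. The weighted H\"older plus interpolation trick above should handle it, but it needs the exponent check $s(1+2\eta)\frac{2}{\alpha'-2\eta}>2$ for some $s<1$. This reduces to $1+2\eta>\frac{\alpha}{2}-2\eta$, which holds since $\eta>0$ and $\frac{\alpha}{2}<1$. A secondary point is justifying that $bg\in L^2$, so that the intermediate quantities are finite. I would handle this by arguing first for smooth, rapidly decaying $f$ and then approximating, or by absorbing it through the displayed commutator identity, since $\Lambda^{\frac{\alpha}{2}}(bf)\in L^2$ by hypothesis.
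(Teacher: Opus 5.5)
Your argument is correct, and its second half coincides with the paper's: you bound $\|b\Lambda^{\frac{\alpha}{2}}f\|_{L^2}\le\|\Lambda^{\frac{\alpha}{2}}(bf)\|_{L^2}+\|\Lambda^{\frac{\alpha}{2}}(bf)-b\Lambda^{\frac{\alpha}{2}}f\|_{L^2}$, apply \eqref{corre} with $\frac{\alpha}{2}$ in place of $\alpha$ to $f$, and multiply by $r$.

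The difference is in the first step. You say \eqref{corre} cannot be quoted for $g=\Lambda^{\frac{\alpha}{2}}f$ because $\|g\|_{L^1}$ is not controlled. You therefore reopen the proof of Proposition \ref{double} and replace the $L^1$--$L^2$ interpolation in $\mathcal{C}_3$ by a weighted H\"older step. The paper instead quotes \eqref{corre} directly and removes the $L^1$ norm afterwards by Cauchy--Schwarz: $\|g\|_{L^1}\le\|b^{-1}\|_{L^2}\|bg\|_{L^2}=C_\eta\|bg\|_{L^2}$. This holds because $b^{-2}=(1+|x|^2)^{-(1+2\eta)}$ is integrable on $\R^2$ for $\eta>0$. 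Thus $r\|g\|_{L^1}$ merges into $C_\eta r\|bg\|_{L^2}$, and your intermediate bound follows in one line.

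Both routes use the same idea: trade an $L^p$ norm with $p<2$ for the weighted norm $\|bg\|_{L^2}$. You do it inside $\mathcal{C}_3$; the paper does it after the fact. The black-box route is shorter and produces the same $N$ in both applications of \eqref{corre}. Yours creates an exponent depending on $s$ that must be reconciled with $N$. Taking the maximum is legitimate for $r\le 1$, which is the only regime used later; otherwise you can tune $s$ and $\gamma$. Your route never needs $g\in L^1$ as an input, but that is automatic once $bg\in L^2$, so it gains nothing essential.

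One arithmetic slip: from $p=\frac{2}{2-\alpha'+2\eta}$ and $\frac1p=\frac12+\frac1q$ you get $q=\frac{2}{1-\alpha'+2\eta}$, not $\frac{2}{\alpha'-2\eta}$; the latter is the conjugate exponent $p'$. The condition $b^{-s}\in L^q$ then reads $s(1+2\eta)>1-\alpha'+2\eta$. This still leaves room for $s<1$ because $\alpha'>0$, so your conclusion is unaffected.
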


\begin{proof}
Substituting $f$ by $\Lambda^{\frac{\alpha}{2}}f$ in \eqref{corre}, we obtain 
\be \label{prop88}
\begin{aligned}
\|\Lambda^{\frac{\alpha}{2}}\left(b \Lambda^{\frac{\alpha}{2}}f\right)  - b\Lambda^{\frac{\alpha}{2}} (\Lambda^{\frac{\alpha}{2}} f)\|_{L^2}
\le
r \left(\|\Lambda^{\frac{\alpha}{2}}f\|_{L^1} + \|b \Lambda^{\frac{\alpha}{2}}f\|_{L^2} \right)
+ C_{\alpha,  \eta} r^{-N} \|\Lambda^{\frac{\alpha}{2}}f \|_{L^2}.
\end{aligned}
\ee The $L^1$ norm of $\Lambda^{\frac{\alpha}{2}}$ can be treated as follows,
\be 
\|\Lambda^{\frac{\alpha}{2}}f\|_{L^1} 
\le \left(\int_{\R^2} \left(\sqrt{1+|x|^2} \right)^{2+ 4\eta }|\Lambda^{\frac{\alpha}{2}}f|^2 dx  \right)^{\frac{1}{2}} \left(\int_{\R^2} \frac{1}{\left(\sqrt{1+|x|^2} \right)^{2+ 4\eta }} dx \right)^{\frac{1}{2}} \le C_{\eta} \|b\Lambda^{\frac{\alpha}{2}}f\|_{L^2},
\ee in which case \eqref{prop88} reduces to 
\be \label{prop77}
\begin{aligned}
\|\Lambda^{\frac{\alpha}{2}}\left(b \Lambda^{\frac{\alpha}{2}}f\right)  - b\Lambda^{\frac{\alpha}{2}} (\Lambda^{\frac{\alpha}{2}} f)\|_{L^2}
\le
C_{\eta} r \|b \Lambda^{\frac{\alpha}{2}}f\|_{L^2} 
+ C_{\alpha,  \eta} r^{-N} \|\Lambda^{\frac{\alpha}{2}}f \|_{L^2}.
\end{aligned}
\ee 
Subtracting and adding $\Lambda^{\frac{\alpha}{2}} (b f)$, we have
\be 
\|b\Lambda^{\frac{\alpha}{2}}f\|_{L^2}
\le \|b\Lambda^{\frac{\alpha}{2}}f - \Lambda^{\frac{\alpha}{2}}(bf) \|_{L^2} + \|\Lambda^{\frac{\alpha}{2}}(bf)\|_{L^2}
\ee 
 where the first term is a commutator that can be controlled through an application of the doubly weighted estimate \eqref{corre}, yielding
\be \label{prop99}
\begin{aligned}
&\|b \Lambda^{\frac{\alpha}{2}} f\|_{L^2} \le  r\|f\|_{L^1} + r\|bf\|_{L^2} + C_{\alpha,\eta}r^{-N}\|f\|_{L^2} + \|\Lambda^{\frac{\alpha}{2}}(bf)\|_{L^2}.
\end{aligned}
\ee  Combining the above estimates gives \eqref{prop15}. 
\end{proof}

Next, we establish a simpler  single weighted commutator estimate:

\begin{prop} \label{commutator-estimate-1}
Let $\omega$ be a Lipschitz function on $\R^2$. Let $f \in L^1 \cap L^2$. Let 
\be 
\alpha \in (0,1), \hspace{0.5cm} p \in \left(\frac{2}{1+\alpha}, \frac{2}{\alpha}\right).
\ee Then the commutator estimate
\be \label{propp}
\|\Lambda^{\alpha}(\omega f) - \omega \Lambda^{\alpha}f\|_{L^p}
\le C_{\omega}C_{\alpha, p} (\|f\|_{L^1} + \|f\|_{L^2}). 
\ee holds, where $C_{\omega}$ is the Lipschitz constant associated with $\omega.$
\end{prop}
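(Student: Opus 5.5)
We follow the pattern of the proof of Proposition \ref{double}, now exploiting Lipschitz rather than H\"older regularity of the weight. Using the integral representation \eqref{fracint}, the commutator is
\be
\Lambda^{\alpha}(\omega f)(x) - \omega(x) \Lambda^{\alpha}f(x) = c_{\alpha} \int_{\R^2} \frac{\omega(y) - \omega(x)}{|x-y|^{2+\alpha}} f(y)\, dy.
\ee
Pulling $\omega(x)$ inside the difference quotient makes the $f(x)$ terms cancel. By the Lipschitz bound $|\omega(x)-\omega(y)| \le C_{\omega}|x-y|$, the commutator is then dominated pointwise by
\be
c_{\alpha} C_{\omega} \int_{\R^2} \frac{|f(y)|}{|x-y|^{1+\alpha}}\, dy = c_{\alpha} C_{\omega} C_{1-\alpha}^{-1} \left(\Lambda^{-(1-\alpha)}|f|\right)(x).
\ee
Here we use \eqref{rieszpotential} with exponent $1-\alpha \in (0,1)$, since $2-(1-\alpha) = 1+\alpha$. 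The kernel $|x-y|^{-1-\alpha}$ is locally integrable because $1+\alpha<2$, so the integral converges absolutely and no principal value is needed once the difference of $\omega$ is present.

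Next we apply the two-dimensional Hardy--Littlewood--Sobolev inequality to the Riesz potential of order $1-\alpha$:
\be
\|\Lambda^{-(1-\alpha)}|f|\|_{L^p} \le C_{\alpha,p} \|f\|_{L^q}, \qquad \frac{1}{p} = \frac{1}{q} - \frac{1-\alpha}{2},
\ee
valid for $1<q<p<\infty$. We check that the range of $p$ in the statement corresponds exactly to $q\in(1,2)$:
\begin{itemize}
\item $q>1$ means $\frac1p < 1 - \frac{1-\alpha}{2} = \frac{1+\alpha}{2}$, that is, $p>\frac{2}{1+\alpha}$;
\item $q<2$ means $\frac1p > \frac12 - \frac{1-\alpha}{2} = \frac{\alpha}{2}$, that is, $p<\frac{2}{\alpha}$.
\end{itemize}
Hence for every $p \in \left(\frac{2}{1+\alpha},\frac{2}{\alpha}\right)$ the HLS exponent $q$ lies strictly between $1$ and $2$, and $p>q$ holds automatically.

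Finally, since $q\in(1,2)$, interpolation (log-convexity of $L^p$ norms) gives
\be
\|f\|_{L^q} \le \|f\|_{L^1}^{\theta}\|f\|_{L^2}^{1-\theta} \le \|f\|_{L^1} + \|f\|_{L^2},
\ee
where the second inequality is Young's inequality for products. Combining the three steps yields \eqref{propp}, with the constant factoring as $C_{\omega}$ times a constant depending only on $\alpha$ and $p$.

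The only delicate point is the first step: we must justify rewriting the principal value integral with $\omega$ inside. This is legitimate first for smooth, rapidly decaying $f$, where the Lipschitz bound makes the integrand absolutely integrable near the diagonal. The general case $f\in L^1\cap L^2$ then follows by density, since the right-hand side of \eqref{propp} is continuous in $L^1\cap L^2$. Alternatively, one can interpret the commutator directly through the absolutely convergent kernel representation above. The exponent bookkeeping in the HLS step is routine.
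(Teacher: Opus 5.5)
Your proof is correct and follows the paper's argument step for step: the kernel representation, then the Lipschitz bound reducing the commutator to the Riesz potential $\Lambda^{\alpha-1}|f|$, then Hardy--Littlewood--Sobolev into $L^{\frac{2p}{2+(1-\alpha)p}}$, then interpolation between $L^1$ and $L^2$ with Young's inequality; your explicit check that the admissible range of $p$ corresponds exactly to $q\in(1,2)$ is a useful detail the paper leaves implicit. The only slip is cosmetic: the kernel numerator should be $\omega(x)-\omega(y)$ rather than $\omega(y)-\omega(x)$, which does not matter once absolute values are taken.
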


\begin{proof}
 We denote by $\mathcal{K}$ the commutator
 \be 
\mathcal{K} = \Lambda^{\alpha}(\omega f) - \omega \Lambda^{\alpha}f.
 \ee In view of the integral representation formula \eqref{fracint} for the positive powers of the fractional Laplacian, we can rewrite $\mathcal{K}$ as 
 \be 
\mathcal{K}(x) = c_{\alpha}\int_{\R^2}\frac{\omega(x) - \omega(y)}{|x-y|^{2+\alpha}} f(y) dy. 
 \ee By the Lipschitz continuity property obeyed by $\omega$, there is a positive constant $C_{\omega}$ such that 
 \be 
|\omega(x) - \omega(y)| \le C_{\omega}|x-y|
 \ee for all $x, y \in \R^2$, and consequently, the commutator $\mathcal{K}$ can be bounded pointwise by 
 \be 
|\mathcal{K}(x)| \le C_{\omega} c_{\alpha}\int_{\R^2}\frac{|f(y)|}{|x-y|^{1+\alpha}} dy
=  C_{\omega} c_{\alpha}C_{\alpha}^{-1}(\Lambda^{\alpha - 1}|f|)(x).
 \ee An application of the Hardy-Littlewood-Sobolev lemma gives rise to 
 \be 
\|\mathcal{K}\|_{L^p} \le C_{\omega} C_{\alpha,p} \|f\|_{L^{\frac{2p}{2+(1-\alpha)p}}},
 \ee which, after interpolating between $L^1$ and $L^2$ and applying Young's inequality, boils down to 
 \be 
\|\mathcal{K}\|_{L^p} \le C_{\omega} C_{\alpha,p} \left(\|f\|_{L^1} + \|f\|_{L^2} \right).
 \ee 
\end{proof}

We end this section by proving some weighted commutator estimates involving mollifiers:

\begin{prop} Let $\omega_1, \omega_2$ be two H\"older continuous functions with respective exponents $\ell_1$ and $\ell_2$. Let $\epsilon > 0, p \ge 1$. Suppose $f, \omega_1 f, \omega_2 f \in L^p$. Then it holds that 
\be \label{molcom1}
\|\omega_1 \omega_2 J_{\epsilon}f - J_{\epsilon}(\omega_1 \omega_2 f)\|_{L^p}
\le C\epsilon^{\ell_1 + \ell_2} \|f\|_{L^p} + C\epsilon^{\ell_2} \|\omega_1 f\|_{L^p} + C\epsilon^{\ell_1} \|\omega_2 f\|_{L^p}.
\ee  If $\omega_2 = 1$, then we have 
\be \label{malcom5}
\|\omega_1 J_{\epsilon}f - J_{\epsilon}(\omega_1  f)\|_{L^p}
\le C\epsilon^{\ell_1} \|f\|_{L^p}.
\ee 
\end{prop}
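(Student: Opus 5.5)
The plan is to write the commutator as an integral against the mollifier kernel and use the algebraic splitting from Proposition~\ref{double}, with the singular kernel $|x-y|^{-2-\alpha}$ replaced by $\rho_\epsilon(x-y)$. Explicitly,
\[
\omega_1\omega_2 J_\epsilon f(x) - J_\epsilon(\omega_1\omega_2 f)(x) = \int_{\R^2} \rho_\epsilon(x-y)\bigl(\omega_1(x)\omega_2(x) - \omega_1(y)\omega_2(y)\bigr) f(y)\,dy .
\]
We then use the identity
\[
\omega_1(x)\omega_2(x)-\omega_1(y)\omega_2(y) = \omega_1(y)\bigl(\omega_2(x)-\omega_2(y)\bigr) + \omega_2(y)\bigl(\omega_1(x)-\omega_1(y)\bigr) + \bigl(\omega_1(x)-\omega_1(y)\bigr)\bigl(\omega_2(x)-\omega_2(y)\bigr).
\]
This splits the commutator into three pieces $\mathcal{M}_1,\mathcal{M}_2,\mathcal{M}_3$, in direct analogy with $\mathcal{C}_1,\mathcal{C}_2,\mathcal{C}_3$.

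Each piece is bounded by exploiting that $\rho$ is supported in a ball of radius $R$, so $\rho_\epsilon(x-y)\neq 0$ forces $|x-y|\le R\epsilon$. Hölder continuity then gives $|\omega_i(x)-\omega_i(y)| \le C_{\omega_i}(R\epsilon)^{\ell_i}$ on the support. This yields the pointwise bounds
\[
|\mathcal{M}_1(x)| \le C\epsilon^{\ell_2}\,(\rho_\epsilon * |\omega_1 f|)(x),\quad |\mathcal{M}_2(x)| \le C\epsilon^{\ell_1}\,(\rho_\epsilon * |\omega_2 f|)(x),\quad |\mathcal{M}_3(x)| \le C\epsilon^{\ell_1+\ell_2}\,(\rho_\epsilon * |f|)(x).
\]
Since $\rho_\epsilon\ge 0$ and $\|\rho_\epsilon\|_{L^1}=1$, Young's convolution inequality gives $\|\rho_\epsilon * g\|_{L^p}\le \|g\|_{L^p}$ for all $p\ge1$. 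This produces exactly the three terms in \eqref{molcom1}, with $C$ depending on $R$, $\ell_i$ and the Hölder constants.

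For \eqref{malcom5}, take $\omega_2\equiv 1$. Then $\omega_2(x)-\omega_2(y)=0$, so $\mathcal{M}_1=\mathcal{M}_3=0$, and only $\mathcal{M}_2$ survives. Its bound $C\epsilon^{\ell_1}\|\omega_2 f\|_{L^p}=C\epsilon^{\ell_1}\|f\|_{L^p}$ is the claim. (Plugging $\omega_2\equiv 1$ formally into \eqref{molcom1} with $\ell_2=0$ would not give this, so the direct computation is cleaner.)

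No step is a genuine obstacle. The only point needing care is the compact support of $\rho$, which is what converts Hölder continuity into powers of $\epsilon$ uniformly in $x$. Without compact support, one would instead need moment bounds $\int|z|^{\ell}\rho(z)\,dz<\infty$ and a Minkowski integral inequality in place of Young's inequality.
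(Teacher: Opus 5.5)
Your proof is correct and follows the paper's argument: you use the same three-term splitting of $\omega_1(x)\omega_2(x)-\omega_1(y)\omega_2(y)$ (the paper's $\mathcal{B}_1,\mathcal{B}_2,\mathcal{B}_3$), then Young's convolution inequality, and you handle $\omega_2\equiv 1$ by noting that two of the pieces vanish. The only cosmetic difference is that you use the compact support of $\rho$ to bound $|x-y|^{\ell}\le (R\epsilon)^{\ell}$ and then $\|\rho_\epsilon\|_{L^1}=1$, whereas the paper keeps the factor in the kernel and computes $\||\cdot|^{\ell}\rho_\epsilon\|_{L^1}=\epsilon^{\ell}\int_{\R^2}|z|^{\ell}\rho(z)\,dz$ by scaling.
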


\begin{proof}
For $x \in \R^2$, we have 
\be 
\begin{aligned}
(\omega_1 \omega_2 J_{\epsilon}f - J_{\epsilon}(\omega_1 \omega_2 f))(x)
&= \int_{\R^2} (\omega_1(x) \omega_2(x) - \omega_1(y) \omega_2(y)) \rho_{\epsilon}(x-y) f(y) dy 
\\&= \mathcal{B}_1 + \mathcal{B}_2 + \mathcal{B}_3
\end{aligned}
\ee where 
\be 
\mathcal{B}_1 = \int_{\R^2} (\omega_1(x) - \omega_1(y)) (\omega_2(x) - \omega_2(y)) \rho_{\epsilon}(x-y) f(y) dy, 
\ee 
\be 
\mathcal{B}_2 = \int_{\R^2} (\omega_2(x)- \omega_2(y)) \omega_1(y) \rho_{\epsilon}(x-y) f(y) dy 
\ee and 
\be 
\mathcal{B}_3 = \int_{\R^2} (\omega_1(x)- \omega_1(y)) \omega_2(y) \rho_{\epsilon}(x-y) f(y) dy. 
\ee By the H\"older continuity property obeyed by $\omega_1$ and $\omega_2$ and the nonnegativity of $\rho_{\epsilon}$, there is a positive constant $C$ such that 
\be 
|\mathcal{B}_1| + |\mathcal{B}_2| + |\mathcal{B}_3|
\le C |x|^{\ell_1 + \ell_2} \rho_{\epsilon} * |f| + C|x|^{\ell_2}\rho_{\epsilon} * |\omega_1 f| + C|x|^{\ell_1}\rho_{\epsilon} * |\omega_2f|,
\ee where $*$ denotes the convolution operator. By Young's convolution inequality, it follows that 
\be \label{molcom2}
\begin{aligned}
&\|\omega_1 \omega_2 J_{\epsilon}f - J_{\epsilon}(\omega_1 \omega_2 f)\|_{L^p}
\\&\quad\quad\le C\||x|^{\ell_1+ \ell_2} \rho_{\epsilon}\|_{L^1} \|f\|_{L^p}
+ C\||x|^{\ell_1} \rho_{\epsilon}\|_{L^1} \|\omega_2 f\|_{L^p}
+ 
 C\||x|^{\ell_2} \rho_{\epsilon}\|_{L^1} \|\omega_1 f\|_{L^p}.
 \end{aligned}
\ee Changing variables and using the facts that $\rho$ is compactly supported and integrable, we estimate 
\be \label{molcom3}
\||x|^{\ell} \rho_{\epsilon}\|_{L^1}
= \int_{\R^2} |x|^{\ell} \epsilon^{{-2}}\rho(x/\epsilon) dx
= \epsilon^{\ell} \int_{\R^2} |z|^{\ell} \rho(z) dz \le C\epsilon^{\ell},
\ee for $\ell = \ell_1, \ell_2, \ell_1 + \ell_2$. Putting \eqref{molcom2} and \eqref{molcom3} together, we obtain \eqref{molcom1}. If $\omega_2 = 1$, then $\mathcal{B}_1 = \mathcal{B}_2 = 0$, and $\mathcal{B}_3$ boils down to 
\be 
\mathcal{B}_3 = \int_{\R^2} (\omega_1(x) - \omega_1(y)) \rho_{\epsilon}(x-y) f(y) dy,
\ee which can be estimated similarly, yielding \eqref{malcom5}. 
\end{proof}

\section{Uniform Moment Bounds} \label{s3}

\begin{prop} \label{propsec3} Let $\alpha \in (0,2)$. Let $\eta \in \left(\alpha - \frac{1}{2}, \frac{\alpha}{2} \right) \cap (0,1)$ if $\alpha \in (0,1)$, and $\eta \in \left(\frac{\alpha}{2} - \frac{1}{2}, \frac{\alpha}{4} \right) \cap (0,1)$ if $\alpha \in [1,2)$. 
Suppose $\theta_0$ is positive on $\R^2$ and obeys the following regularity assumptions:
\be 
\theta_0 \in L^1 \cap L^{\infty}, \hspace{1cm}  (1+|x|^2)^{\frac{1}{2}+\eta} \theta_0 \in L^2(\R^2).   
\ee Then the family of regularized solutions $\left\{\theta^{\epsilon} \right\}_{\epsilon > 0}$ to \eqref{moll}--\eqref{moll1} is nonnegative and is uniformly bounded in $\epsilon$ in the spaces 
\be 
L^{\infty}(0,\infty; L^p(\RR^2)) \cap L^2(0, \infty; \dot{H}^{\frac{\alpha}{2}}(\R^2))
\ee for any $p \ge 1$. Moreover, there is a positive constant $\Gamma$ that depends only on the initial data and a positive constant $m$ that depends on $\alpha$ and $\eta$ such that  
\be 
\|(1+|x|^2)^{\frac{1}{2}+\eta} \theta^{\epsilon}(t)\|_{L^2}^2 \le \Gamma (1+t)^m
\ee for any $t \ge 0$.
\end{prop}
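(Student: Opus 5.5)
We follow the strategy laid out in the introduction: first the basic a priori bounds, then a weighted $L^2$ energy estimate for $b\theta^\epsilon$, where $b(x) = (\sqrt{1+|x|^2})^{1+2\eta}$. Note that $b^2 = (1+|x|^2)^{1+2\eta}$ and $\|(1+|x|^2)^{\frac12+\eta}\theta^\epsilon\|_{L^2} = \|b\theta^\epsilon\|_{L^2}$.

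\textbf{Nonnegativity and uniform $L^p$ bounds.} Since $J_\epsilon\theta_0 \ge 0$, nonnegativity follows from a minimum principle for the smooth solution of \eqref{moll}. Test the equation with $\theta_-^\epsilon$ (or argue at a point of negative minimum): there $\nabla\theta^\epsilon = 0$, $-\Delta\theta^\epsilon \le 0$, and $\Lambda^\alpha\theta^\epsilon \le 0$ by \eqref{fracint}, so the negative part cannot grow. For the $L^p$ bounds, multiply by $|\theta^\epsilon|^{p-2}\theta^\epsilon$. The transport term vanishes because $\nabla\cdot J_\epsilon u^\epsilon = 0$, and the Córdoba--Córdoba inequality $\int |\theta|^{p-2}\theta\Lambda^\alpha\theta \ge 0$ gives $\|\theta^\epsilon(t)\|_{L^p} \le \|J_\epsilon\theta_0\|_{L^p} \le \|\theta_0\|_{L^p}$ for $1 \le p \le \infty$; the $L^1$ bound is conservation of mass, since $\theta^\epsilon \ge 0$. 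The case $p=2$ yields $\kappa\int_0^\infty\|\Lambda^{\alpha/2}\theta^\epsilon\|_{L^2}^2 \le \frac12\|\theta_0\|_{L^2}^2$. Both bounds are uniform in $\epsilon$.

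\textbf{Weighted estimate.} Multiply \eqref{moll} by $b^2\theta^\epsilon$ and integrate. The transport term equals $-\frac12\int (J_\epsilon u^\epsilon\cdot\nabla b^2)|\theta^\epsilon|^2$ after integrating by parts. Since $|\nabla b^2| \le C b^2(1+|x|^2)^{-1/2}$, this is controlled by
\[
\int |J_\epsilon u^\epsilon|\, b^{2-\frac{1}{1+2\eta}}|\theta^\epsilon|^2 .
\]
By Hölder and the $L^p$ bounds on $u^\epsilon$ (from (I)(ii) or (II)(ii) and the $L^p$ bounds on $\theta^\epsilon$), together with $\|J_\epsilon u^\epsilon\|_{L^\infty} \le C_\epsilon$, one cannot use the $L^\infty$ norm uniformly. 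Instead bound
\[
\le \|u^\epsilon\|_{L^q}\,\|b\theta^\epsilon\|_{L^2}^{2-\sigma}\,(\text{powers of }\|\theta^\epsilon\|_{L^{p}})
\]
by interpolating the weight $b^{2-\frac{1}{1+2\eta}}$ between $b^2$ and $1$. This produces a term $C\|b\theta^\epsilon\|_{L^2}^{2-\sigma}$ with $\sigma > 0$, the mechanism behind polynomial rather than exponential growth. The Laplacian term gives $\epsilon\|b\nabla\theta^\epsilon\|^2$ minus $\epsilon\int \nabla b^2\cdot\theta^\epsilon\nabla\theta^\epsilon$, which is absorbed since $|\nabla b| \le Cb$.

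\textbf{Dissipation term (main obstacle).} Here $\int \Lambda^\alpha\theta^\epsilon\, b^2\theta^\epsilon$ must be extracted as a nonnegative quantity plus commutators.
\begin{itemize}
\item For $\alpha\in(0,1)$, write $b^2\Lambda^\alpha\theta = \Lambda^\alpha(b^2\theta) - [\Lambda^\alpha, b^2]\theta$. Then $\int \Lambda^\alpha(b^2\theta)\theta$ is rearranged using the Hölder weight $b$: writing it as $\int \Lambda^{\alpha}(b\theta)\, b\theta$ plus a doubly weighted commutator, Proposition~\ref{double} (the case $\omega_1=\omega_2=(\sqrt{1+|x|^2})^{\frac12+\eta}$, i.e.\ \eqref{corre}) bounds the error by
\[
r\bigl(\|\theta\|_{L^1} + \|b\theta\|_{L^2}\bigr)\|b\theta\|_{L^2} \cdot(\dots) + C r^{-N}\|\theta\|_{L^2}\,\|b\theta\|_{L^2}.
\]
\item For $\alpha\in[1,2)$, write $\int b^2\theta\Lambda^\alpha\theta = \int \Lambda^{\alpha/2}(b^2\theta)\Lambda^{\alpha/2}\theta$ and use Corollary~\ref{coro} to move $b$ across $\Lambda^{\alpha/2}$. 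This gives $\|\Lambda^{\alpha/2}(b\theta)\|^2$ minus errors weighted by $r$, $r^2$, and $r^{-N}$. The terms with $r\|\Lambda^{\alpha/2}(b\theta)\|$ are absorbed by choosing $r$ small, while the $r^{-N}\|\Lambda^{\alpha/2}\theta\|$ terms are integrable in time by the previous step.
\end{itemize}

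Collecting, with $Y = \|b\theta^\epsilon\|_{L^2}^2$, we aim for
\[
Y' \le C Y^{1-\sigma/2} + C\bigl(1+\|\Lambda^{\alpha/2}\theta^\epsilon\|_{L^2}^2\bigr) Y^{1/2}.
\]
Integrating this ODE inequality, using $Y(0) \le C\|b\theta_0\|_{L^2}^2$ (valid uniformly by \eqref{molcom1} with $\ell_i = \frac12+\eta$), gives $Y \le \Gamma(1+t)^m$ with $m = 2/\sigma$ or similar. Rigorously justifying the integrations by parts for the smooth, decaying $\theta^\epsilon$ is routine. The delicate point is making sure every commutator remainder is either absorbed or has a sublinear power of $Y$, which dictates the ranges of $\eta$.
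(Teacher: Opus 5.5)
\textbf{Comparison with the paper.} Your architecture matches the paper's. You use the minimum principle and C\'ordoba--C\'ordoba for nonnegativity and the $L^p$ bounds. You run an $L^2$ estimate for $b\theta^\epsilon$, controlling the dissipative commutator by \eqref{corre} for $\alpha\in(0,1)$ and by Corollary~\ref{coro} after splitting $\Lambda^\alpha=\Lambda^{\alpha/2}\Lambda^{\alpha/2}$ for $\alpha\in[1,2)$. You use \eqref{molcom1} for the initial datum.

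The genuine difference is the transport term. With $s=\frac{1}{1+2\eta}$ you write $|\nabla b^2|\le Cb^{2-s}$ and $b^{2-s}|\theta^\epsilon|^2=(b|\theta^\epsilon|)^{2-s}|\theta^\epsilon|^{s}$. H\"older then gives $C\|u^\epsilon\|_{L^{4/s}}\|\theta^\epsilon\|_{L^4}^{s}\,Y^{1-s/2}$ from the uniform $L^p$ bounds alone. The paper instead bounds this term by $\|u^\epsilon\|_{L^{4/\alpha}}\|b\theta^\epsilon\|_{L^{4/(2-\alpha)}}\|a\theta^\epsilon\|_{L^2}$, uses Sobolev embedding, and absorbs into $\|\Lambda^{\alpha/2}(b\theta^\epsilon)\|_{L^2}^2$. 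That route needs the auxiliary first-moment bounds \eqref{atheta} and \eqref{epsat}, which it also uses for the viscous term. Your interpolation makes that step unnecessary. This is a real gain: the paper derives \eqref{atheta} through Proposition~\ref{commutator-estimate-1}, which is stated only for $\alpha\in(0,1)$, yet also uses it when $\alpha\in[1,2)$.

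\textbf{Where the argument does not close.} The step you flag as delicate is where the argument, as written, fails. Several remainders are linear in $Y=\|b\theta^\epsilon\|_{L^2}^2$, and your target inequality silently drops them.

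\emph{Commutator remainders.} \eqref{corre} multiplied by $\|b\theta^\epsilon\|_{L^2}$ contains $rY$. For $\alpha\in[1,2)$, both \eqref{prop15} and the Young absorption of $r\|b\theta^\epsilon\|_{L^2}\|\Lambda^{\alpha/2}(b\theta^\epsilon)\|_{L^2}$ produce $Cr^2Y$. Choosing $r$ as a fixed small constant does not remove these terms. Then $Y'\le cY+\cdots$ with $c>0$, which only gives $e^{ct}$ growth.

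\emph{Viscous remainder.} Using $|\nabla b|\le Cb$ leaves $C\epsilon Y$ after absorption. That gives $e^{C\epsilon t}$ growth, not $\Gamma(1+t)^m$ uniformly in $\epsilon\in(0,1)$.

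\textbf{How to fix it.} The viscous term yields to your own device. The remainder equals $\frac{\epsilon}{2}\int\Delta(b^2)|\theta^\epsilon|^2$, and $\Delta(b^2)\le C(1+|x|^2)^{2\eta}=Cb^{2-2s}$. Hence it is at most $C\epsilon\, Y^{1-s}\|\theta_0\|_{L^2}^{2s}$.

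For the commutator remainders, $r$ must be allowed to vary. The paper takes $r=(1+t)^{-3/2}$, and $(1+t)^{-3/4}$ in $\mathcal{A}_1$. This makes the coefficient of $Y$ integrable in time, so the Gronwall factor stays bounded, at the price of polynomially growing forcing terms such as $(1+t)^{3N+3/2}$.

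In your sublinear framework you can instead take $r=\min\{1,Y^{-\delta}\}$ with $\delta>0$ small, say $2N\delta<1$.
\begin{itemize}
\item Every remainder is then bounded by $C(1+\|\Lambda^{\alpha/2}\theta^\epsilon\|_{L^2}^2)(1+Y^{\nu})$ for some $\nu<1$.
\item The $r^{-N}$ terms become $Y^{1/2+N\delta}$, not $Y^{1/2}$ as in your displayed inequality.
\item Since $\int_0^\infty\|\Lambda^{\alpha/2}\theta^\epsilon\|_{L^2}^2\,dt$ is uniformly bounded, $(1+Y)^{1-\nu}$ grows at most linearly in $t$. This gives the polynomial bound.
\end{itemize}

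Finally, for $\alpha\in[1,2)$ the commutator must land on $\Lambda^{\alpha/2}\theta^\epsilon$, as in the paper's $\mathcal{A}_1$, $\mathcal{A}_{2,1}$ split. Commuting $b$ directly out of $\Lambda^{\alpha/2}(b\cdot b\theta^\epsilon)$ and applying \eqref{corre} would require $\|b^2\theta^\epsilon\|_{L^2}$, which is not controlled.
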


\begin{proof}
    The proof is divided into several major steps.

    {\bf{Step 1. Nonnegativity of the regularized solutions.}} Let $\epsilon > 0$. Fix $t \ge 0$. Since $\theta^{\epsilon}$ decays to $0$ as $|x|  \rightarrow \infty$, there exists a radius $R=R(\epsilon, t)$ that depends on $\epsilon$ and $t$ such that $|\theta^{\epsilon}(x,t)| \le 1$ for any $|x| \ge R$. Moreover, $\theta^{\epsilon}$ is a continuous function of $x$ on the compact ball centered at $0$ with radius $R$, and it is thus bounded there. Consequently, the spatial infimum of $\theta^{\epsilon}(\cdot, t)$ on $\R^2$ is finite.
    Let \be 
m^{\epsilon}(t) = \inf \left\{\theta^{\epsilon}(x,t): x \in \R^2\right\}.
    \ee If $m^{\epsilon}(t) \ge  0$, then $\theta^{\epsilon}(x,t) \ge 0$ for all $x \in \R^2$ and there is nothing to prove. Suppose $m^{\epsilon}(t) < 0$ at some time $t>0$. In view of the spatial decay of the regularized solution $\theta^{\epsilon}$, there exists an $R = R(\epsilon ,t ) > 0$ such that $|\theta^{\epsilon}(x,t)| \le \frac{|m^{\epsilon}(t)|}{2}$ for all $|x| \ge R$, in which case we have $\theta^{\epsilon} \ge m^{\epsilon}(t)/2$ for all $|x| \ge R$. Since $m^{\epsilon}(t)$ is negative, it follows that $m^{\epsilon}(t)/2 > m^{\epsilon}(t)$. In other words, $m^{\epsilon}(t)/2$ is a lower bound for $\theta^{\epsilon}$ on $|x| \ge R$. Hence 
    \be 
    m^{\epsilon}(t) = \inf \left\{\theta^{\epsilon}(x,t): |x| \le R \right\}.
    \ee However, $\theta^{\epsilon}$ is continuous on $|x| \le R$, so it attains its minimum there. Therefore, there is $\bar{x}$ such that $|\bar{x}| \le  R$ and \be 
\theta^{\epsilon}(\bar{x}, t) = \min \left\{\theta^{\epsilon}(x,t): |x| \le R \right\}= m^{\epsilon}(t). 
    \ee At $\bar{x}$, it holds that $\na \theta^{\epsilon}(\bar{x},t) = 0$ and $\Delta \theta^{\epsilon}(\bar{x},t) \ge 0$, thus
    \be 
(\pa_t \theta^{\epsilon})(\bar{x},t) + \kappa (\Lambda^{\alpha}\theta^{\epsilon})(\bar{x}, t) \ge 0.  
    \ee 
From the integral representation formula for the fractional Laplacian \eqref{fracint}, we note that $\Lambda^{\alpha}\theta^{\epsilon}(\bar{x}, t) \le 0$ and deduce that 
\be 
(\pa_t \theta^{\epsilon})(\bar{x}, t) \ge 0.
\ee Finally, by the Hamilton's trick \cite{mantegazza2011lecture}, it follows that 
\be 
\pa_t m^{\epsilon} \ge 0,
\ee which, after integrating in time from $0$ to $t$, gives
\be 
m^{\epsilon}(t) \ge m^{\epsilon} (0) > 0,
\ee contradicting our assumption that $m^{\epsilon}(t)$ is negative. Therefore, $\theta^{\epsilon}  \ge 0$ for any $x \in \R^2$ and any $t \ge 0$.

{\bf{Step 2. Uniform bounds in $L^p$ spaces.}} Let $p >1$.
The $L^p$ evolution of $\theta^{\epsilon}$ is described by 
\be \label{l2}
\frac{1}{p}\frac{d}{dt}\|\theta^{\epsilon}\|_{L^p}^p + \kappa \int_{\RR^2} (\theta^{\epsilon})^{p-1} \Lambda^{\alpha} \theta^{\epsilon} dx - \epsilon \int_{\R^2} 
(\theta^{\epsilon})^{p-1} \Delta \theta^{\epsilon} dx = 0.
\ee The nonlinear term in $u^{\epsilon}$ vanishes due to the divergence-free property obeyed by $J_{\epsilon} u^{\epsilon}$. In view of the C\'ordoba-C\'ordoba inequality \cite{cordoba2004maximum}, the diffusive and parabolic terms are nonnegative. Integrating in time from $0$ to $t$, we deduce that 
\be \label{ppnorm}
\|\theta^{\epsilon}(t)\|_{L^p} \le \|\theta_0\|_{L^p}
\ee for any $t \ge 0$. Moreover, it follows from \eqref{l2} with $p=2$ that
\be \label{l2e}
\kappa \int_{0}^{t} \left(\|\Lambda^{\frac{\alpha}{2}}\theta^{\epsilon}\|_{L^2}^2 + \epsilon \|\na \theta^{\epsilon}\|_{L^2}^2 \right) ds \le \frac{1}{2}\|\theta_0\|_{L^2}^2
\ee for any $t \ge 0$. As for the case $p=1$, it is evident that the $L^1$ norm of $\theta^{\epsilon}$ is conserved in time and obeys 
\be 
\|\theta^{\epsilon}(t)\|_{L^1} = \|\theta_0\|_{L^1}
\ee for any $t \ge 0$. 

{\bf{Step 3. Quadratic moment uniform bounds for $\alpha \in (0,1)$.}}
Let $a(x) = \sqrt{1+|x|^2}$. The $L^2$ norm of $a\theta^{\epsilon}$ evolves according to 
\be 
\frac{1}{2}\frac{d}{dt}\|a\theta^{\epsilon}\|_{L^2}^2
+ \int_{\RR^2} a J_{\epsilon}u^{\epsilon} \cdot \na \theta^{\epsilon} a \theta^{\epsilon} dx + \kappa \int_{\R^2} a \theta^{\epsilon} a \Lambda^{\alpha}\theta^{\epsilon} dx - \epsilon \int_{\R^2} a \theta^{\epsilon} a \Delta \theta^{\epsilon} dx  = 0.
\ee 
Subtracting and adding $\Lambda^{\alpha} (a\theta^{\theta})$, the diffusion term can be rewritten as 
\be 
\kappa \int_{\R^2} a \theta^{\epsilon} a \Lambda^{\alpha}\theta^{\epsilon} dx
= \kappa \|\Lambda^{\alpha}(a\theta^{\epsilon})\|_{L^2}^2 + \kappa \int_{\R^2} a\theta^{\epsilon} [a, \Lambda^{\alpha}]\theta^{\epsilon}  dx
\ee where $[a, \Lambda^{\alpha}]\theta^{\epsilon}$ denotes the commutator $a \Lambda^{\alpha} \theta^{\epsilon} - \Lambda^{\alpha} (a\theta^{\epsilon})$. In view of Proposition \ref{commutator-estimate-1}, we have 
\be 
\left| \kappa \int_{\R^2} a\theta^{\epsilon} [a, \Lambda^{\alpha}]\theta^{\epsilon}  dx \right|
\le C\|a\theta^{\epsilon}\|_{L^2} \left(\|\theta^{\epsilon}\|_{L^1} + \|\theta^{\epsilon}\|_{L^2} \right)
\le C\|a\theta^{\epsilon}\|_{L^2} \left(\|\theta_0\|_{L^1} + \|\theta_0\|_{L^2} \right).
\ee Regarding the nonlinear term in $u^{\epsilon}$, we can subtract the term $J_{\epsilon}u^{\epsilon} \cdot \na (a\theta^{\epsilon}) a \theta^{\epsilon}$ from the integrand at no cost due to the cancellation law  
\be 
\int_{\R^2} J_{\epsilon}u^{\epsilon} \cdot \na (a\theta^{\epsilon}) a \theta^{\epsilon} = 0.
\ee Thus, it follows that 
\be 
\begin{aligned}
&\left|\int_{\RR^2} a J_{\epsilon}u^{\epsilon} \cdot \na \theta^{\epsilon} a \theta^{\epsilon} dx \right|
= \left|\int_{\RR^2} a(\theta^{\epsilon})^2 J_{\epsilon}u^{\epsilon} \cdot \na a dx \right|
\\&\quad\quad\le C\|a\theta^{\epsilon}\|_{L^2}\|\theta^{\epsilon}\|_{L^4}\|u^{\epsilon}\|_{L^4}
\le C\Gamma_1 \|a\theta^{\epsilon}\|_{L^2},
\end{aligned}
\ee where $\Gamma_1$ is a positive constant depending only on the initial data. More precisely, $\Gamma_1 = \|\theta_0\|_{L^8}^2\|\theta_0\|_{L^4}$ if $u^{\epsilon}= \tilde{T}(\theta^{\epsilon}, \theta^{\epsilon})$ and $\Gamma_1 = \|\theta_0\|_{L^4}^2$ if $u^{\epsilon}= T\theta^{\epsilon}$. As for the $\epsilon$-regularizing term, we have 
\be 
\begin{aligned}
- \epsilon \int_{\R^2} a \theta^{\epsilon} a \Delta \theta^{\epsilon} dx  
&= \epsilon \|\na (a\theta^{\epsilon})\|_{L^2}^2
+ \epsilon \int_{\R^2} a \theta^{\epsilon} \left(\Delta (a\theta^{\epsilon}) - a\Delta \theta^{\epsilon} \right) dx
\\&= \epsilon \|\na (a\theta^{\epsilon})\|_{L^2}^2
+ \epsilon \int_{\R^2} a \theta^{\epsilon} \left(2\na a \cdot \na \theta^{\epsilon} + \theta^{\epsilon} \Delta a \right) dx.
\end{aligned}
\ee We bound the second term above as follows,  
\be 
\left|\epsilon \int_{\R^2} a \theta^{\epsilon} \left(2\na a \cdot \na \theta^{\epsilon} + \theta^{\epsilon} \Delta a \right) dx\right|
\le C\epsilon \|a\theta^{\epsilon}\|_{L^2} \left(\|\na \theta^{\epsilon}\|_{L^2} + \|\theta^{\epsilon}\|_{L^4} \right).
\ee Here we have used the fact that $\na a = \frac{x}{\sqrt{1+|x|^2}}\in L^{\infty}$ and $\Delta a  = \frac{2+|x|^2}{(1+|x|^2)^{3/2}} \in L^4$. Combining the above estimates, we obtain 
\be \label{1mB}
\frac{1}{2}\frac{d}{dt}\|a\theta^{\epsilon}\|_{L^2}^2 + \kappa \|\Lambda^{\alpha} (a\theta^{\epsilon})\|_{L^2}^2 +\epsilon \|\na (a\theta^{\epsilon})\|_{L^2}^2
\le C\|a\theta^{\epsilon}\|_{L^2} \left(\Gamma_2 + \epsilon \|\na \theta^{\epsilon}\|_{L^2}\right) 
\ee for some constant $\Gamma_2$ depending only on the initial data. The latter gives rise to the differential inequality 
\be 
\frac{d}{dt}\|a\theta^{\epsilon}\|_{L^2} \le C\left(\Gamma_2 + \epsilon \|\na \theta^{\epsilon}\|_{L^2}^2 + 1\right).
\ee Integrating in time from $0$ to $t$ and using \eqref{l2e} and \eqref{malcom5}, we infer that 
\be \label{atheta}
\|a\theta^{\epsilon}\|_{L^2} \le \|aJ_{\epsilon}\theta_0 - J_{\epsilon}(a\theta_0) ||_{L^2} + \|J_{\epsilon}(a\theta_0)\|_{L^2} + C(\Gamma_2+1)t + \epsilon \int_0^t \|\na \theta^{\epsilon}\|_{L^2}^2 ds \le \Gamma_3 (1+t)
\ee for any $t \ge 0$, where $\Gamma_3$ is a positive constant depending only on the initial data. Moreover, integrating \eqref{1mB} from $0$ to $t$ gives 
\be \label{epsat}
\kappa \int_{0}^{t} \|\Lambda^{\alpha}(a\theta^{\epsilon})(s)\|_{L^2}^2 ds + \epsilon \int_{0}^{t}\|\na (a\theta^{\epsilon})\|_{L^2}^2 ds 
\le \Gamma_4(1+t)^{\frac{3}{2}}  
\ee for any $t \ge 0$, where $\Gamma_4$ is a constant that depends only on the initial data.

{\bf{Step 4. Fractional  moment bounds for $\alpha \in (0,1)$.}} Let $b(x) =(1+|x|^2)^{\frac{1}{2}+\eta}$. The $L^2$ norm of $b \theta^{\epsilon}$ evolves according to 
\be \label{bthetaevolution}
\begin{aligned}
&\frac{1}{2}\frac{d}{dt} \|b\theta^{\epsilon}\|_{L^2}^2 + \|\Lambda^{\frac{\alpha}{2}} (b \theta^{\epsilon})\|_{L^2}^2
\\&\quad= - \int_{\R^2} bJ_{\epsilon}u^{\epsilon} \cdot \na \theta^{\epsilon} b\theta^{\epsilon} dx + \int_{\R^2} (\Lambda^{\alpha}(b\theta^{\epsilon}) - b\Lambda^{\alpha}\theta^{\epsilon}) b\theta^{\epsilon} dx
+ \epsilon \int_{\RR^2} b\Delta \theta^{\epsilon} b\theta^{\epsilon} dx.
\end{aligned}
\ee In order to control the nonlinear term in $u^{\epsilon}$, we write
\be 
b \na \theta^{\epsilon} = \na (b\theta^{\epsilon}) - \theta^{\epsilon} \na b = \na (b\theta^{\epsilon}) - (1+2\eta) \frac{x}{(1+|x|^2)^{\frac{1}{2}-\eta}} \theta^{\epsilon},
\ee (where $\eta \in (0,1/2)$), integrate by parts, use the divergence-free condition obeyed by $J_{\epsilon}u^{\epsilon}$, apply H\"older's inequality, and employ the continuous Sobolev embedding of $\dot{H}^{\frac{\alpha}{2}}$ into $L^{\frac{4}{2-\alpha}}$ to obtain the bound 
\be 
\begin{aligned}
  \left|\int_{\R^2} bJ_{\epsilon}u^{\epsilon} \cdot \na \theta^{\epsilon} b\theta^{\epsilon} dx\right|
  &= \left|\int_{\R^2} (J_{\epsilon}u^{\epsilon} \cdot \na b) b(\theta^{\epsilon})^2 dx\right|
  \le \int_{\R^2} |J_{\epsilon}u^{\epsilon}||\sqrt{1+|x|^2}\theta^{\epsilon}| |b\theta^{\epsilon}| dx
  \\&\le C\|u^{\epsilon}\|_{L^{\frac{4}{\alpha}}} \|b \theta^{\epsilon}\|_{L^{\frac{4}{2-\alpha}}} \|a\theta^{\epsilon}\|_{L^2}
\le C\|u^{\epsilon}\|_{L^{\frac{4}{\alpha}}} \|\Lambda^{\frac{\alpha}{2}}(b \theta^{\epsilon})\|_{L^2} \|a\theta^{\epsilon}\|_{L^2}
\\&\le \frac{1}{4} \|\Lambda^{\frac{\alpha}{2}}(b \theta^{\epsilon})\|_{L^2}^2 + \Gamma_5   \|a\theta^{\epsilon}\|_{L^2}^2 
\end{aligned}
\ee where $\Gamma_5$ is a constant that depends only on the initial data. More precisely, $\Gamma_5 = \|\theta_0\|_{L^{\frac{8}{\alpha}}}^4$ if $u^{\epsilon}= \tilde{T}(\theta^{\epsilon}, \theta^{\epsilon})$ and $\Gamma_5 = \|\theta_0\|_{L^{\frac{4}{\alpha}}}^2$ if $u^{\epsilon}= T\theta^{\epsilon}$. As for the commutator, it holds that 
\be \label{trickyterm}
\begin{aligned}
    &\left|\int_{\R^2} (\Lambda^{\alpha}(b\theta^{\epsilon}) - b\Lambda^{\alpha}\theta^{\epsilon}) b\theta^{\epsilon} dx \right|
    \le \|b \theta^{\epsilon}\|_{L^{2}}
    \|\Lambda^{\alpha}(b\theta^{\epsilon}) - b\Lambda^{\alpha}\theta^{\epsilon}\|_{L^2}
    \\&\le r\|b\theta^{\epsilon}\|_{L^2}^2 + r\|\theta^{\epsilon}\|_{L^1}\|b\theta^{\epsilon}\|_{L^2} + Cr^{-N} \|\theta^{\epsilon}\|_{L^2}\|b\theta^{\epsilon}\|_{L^2} 
    \\&\le 2r\|b \theta^{\epsilon}\|_{L^2}^2 + Cr\|\theta^{\epsilon}\|_{L^1}^2 + Cr^{-2N-1}\|\theta^{\epsilon}\|_{L^2}^2
\end{aligned}
\ee  for any $r = r(t) > 0$ as a consequence of \eqref{corre}. The application of Proposition \ref{double} restricts this case to values of $\alpha$ in the range $0 < \alpha < 1$. 
Choosing $r(t) = (1+t)^{-\frac{3}{2}}$ gives
\be \label{trickyterm}
\begin{aligned}
    &\left|\int_{\R^2} (\Lambda^{\alpha}(b\theta^{\epsilon}) - b\Lambda^{\alpha}\theta^{\epsilon}) b\theta^{\epsilon} dx \right|
    \\&\le 2(t+1)^{-\frac{3}{2}}\|b \theta^{\epsilon}\|_{L^2}^2 + C\|\theta^{\epsilon}\|_{L^1}^2 + C(t+1)^{3N+\frac{3}{2}}\|\theta^{\epsilon}\|_{L^2}^2.
\end{aligned}
\ee  
The $\epsilon$-regularizing term can be treated via integration by parts, as follows,
\be 
\begin{aligned}
    \epsilon \int_{\RR^2} b\Delta \theta^{\epsilon} b\theta^{\epsilon} dx 
    = -\epsilon \|\na (b {\theta}^{\epsilon})\|_{L^2}^2 - 2\epsilon \int_{\RR^2} \na b \cdot \na \theta^{\epsilon} b \theta^{\epsilon} dx - \epsilon \int_{\RR^2} \Delta b \theta^{\epsilon} b \theta^{\epsilon} dx, 
\end{aligned}
\ee where the last two terms can be bounded by 
\be 
\begin{aligned}
    &-2\epsilon \int_{\RR^2} \na b \cdot \na \theta^{\epsilon} b \theta^{\epsilon} dx - \epsilon \int_{\RR^2} \Delta b \theta^{\epsilon} b \theta^{\epsilon} dx
    \\&\quad\quad\le C\epsilon \int_{\R^2} |a \na \theta^{\epsilon}| |b\theta^{\epsilon}| dx
    + C\epsilon \int_{\R^2} |\theta^{\epsilon}| |b \theta^{\epsilon}| dx
     \\&\quad\quad\le C\epsilon \int_{\R^2} (|\na (a \theta^{\epsilon})| + |\na a| |\theta^{\epsilon}|) |b\theta^{\epsilon}| dx
    + C\epsilon \int_{\R^2} |\theta^{\epsilon}| |b \theta^{\epsilon}| dx
    \\&\quad\quad\le C\epsilon \left(\|\na (a\theta^{\epsilon})\|_{L^2} +\|\theta^{\epsilon}\|_{L^2} \right) \|b\theta^{\epsilon}\|_{L^2}
    \\&\quad\quad\le C\epsilon^2 (t+1)^{\frac{3}{2}} \left(\|\na (a\theta^{\epsilon})\|_{L^2}^2 +\|\theta^{\epsilon}\|_{L^2}^2 \right) + (t+1)^{-\frac{3}{2}}\|b\theta^{\epsilon}\|_{L^2}^2.
\end{aligned}
\ee Here we computed  
\be 
\na b = (1+2\eta) \frac{x}{(1+|x|^2)^{\frac{1}{2}-\eta}}, \hspace{1cm} \Delta b = \frac{2(1+2\eta)}{(1+|x|^2)^{\frac{1}{2}-\eta}} + (2\eta - 1)(1+2\eta) \frac{|x|^2}{(1+|x|^2)^{\frac{3}{2}-\eta}}, 
\ee so that 
\be 
|\na b| \le Ca, \hspace{1cm} |\Delta b| \le C
\ee on $\R^2$.
Combining the above estimates and using the uniform boundedness of the regularized solutions in $L^1$ and $L^2$ yield
\be 
\begin{aligned}
\frac{d}{dt} \|b\theta^{\epsilon}\|_{L^2}^2 
&\le \Gamma_5\|a\theta^{\epsilon}\|_{L^2}^2 + \Gamma_6(t+1)^{3N+ \frac{3}{2}} 
\\&+ C\epsilon^2 (t+1)^{\frac{3}{2}} \left(\|\na (a\theta^{\epsilon})\|_{L^2}^2 +\|\theta^{\epsilon}\|_{L^2}^2 \right)
+  C(t+1)^{-\frac{3}{2}}\|b\theta^{\epsilon}\|_{L^2}^2.
\end{aligned}
\ee The incorporation of the time decay $(t+1)^{-\frac{3}{2}}$ as a coefficient to $\|b\theta^{\epsilon}\|_{L^2}^2$ is implemented so that the integrating factor $e^{\int_{0}^{t} \frac{1}{(s+1)^{3/2}} ds}$ is guaranteed to be uniformly bounded in time. 

Finally, we integrate in time, use the bounds \eqref{ppnorm}, \eqref{atheta}, and \eqref{epsat} established in the previous steps,  apply Gronwall's inequality, and employ the weighted commutator estimate \eqref{molcom1} to infer that
\be 
\|b\theta^{\epsilon}(t)\|_{L^2}^2 
\le \Gamma_7(1+t)^{m} 
\ee  for any $t \ge 0$, where $\Gamma_7$ is a positive constant that depends only on the initial data and $m$ is a positive constant that depends on $\alpha$ and $\eta$. 
 
{\bf{Step 5. Fractional moment uniform bounds for $\alpha \in [1,2)$.}} In this case, $b \theta^{\epsilon}$ still evolves according to \eqref{bthetaevolution} and all the terms can be estimated similarly except for the fractional diffusion term. To reapply the commutator estimates established in Corollary \ref{coro}, we introduce a new decomposition technique that reduces the dissipative power $\alpha$ to half its value, shifting it from $[1,2)$ to $(0,1)$. In fact, using the semigroup property obeyed by fractional powers of the Laplacian, we have 
\be 
\int_{\R^2} b\Lambda^{\alpha} \theta^{\epsilon} b \theta^{\epsilon} dx
= \mathcal{A}_1 + \mathcal{A}_2
\ee where 
\be 
\mathcal{A}_1 = \int_{\R^2} \left(b\Lambda^{\frac{\alpha}{2}} (\Lambda^{\frac{\alpha}{2}} \theta^{\epsilon}) - \Lambda^{\frac{\alpha}{2}} (b\Lambda^{\frac{\alpha}{2}}\theta^{\epsilon}) \right) b \theta^{\epsilon} dx
\ee and 
\be 
\mathcal{A}_2 = \int_{\R^2} \Lambda^{\frac{\alpha}{2}} (b\Lambda^{\frac{\alpha}{2}}\theta^{\epsilon})  b \theta^{\epsilon} dx.
\ee We further use the self-adjointness property of $\Lambda^{\frac{\alpha}{2}}$ to rewrite $\mathcal{A}_2$ as 
\be 
\mathcal{A}_2 = \int_{\R^2} b\Lambda^{\frac{\alpha}{2}}\theta^{\epsilon}  \Lambda^{\frac{\alpha}{2}} (b \theta^{\epsilon}) dx
\ee and decompose it into the sum of two terms, $\mathcal{A}_{2,1}$ and $\mathcal{A}_{2,2}$ given by 
\be 
\mathcal{A}_{2,1} = \int_{\R^2}  \left(b\Lambda^{\frac{\alpha}{2}}\theta^{\epsilon} - \Lambda^{\frac{\alpha}{2}} (b\theta^{\epsilon})  \right)\Lambda^{\frac{\alpha}{2}} (b \theta^{\epsilon}) dx
\ee and 
\be 
\mathcal{A}_{2,2} = \int_{\R^2} \Lambda^{\frac{\alpha}{2}} (b\theta^{\epsilon})  \Lambda^{\frac{\alpha}{2}} (b \theta^{\epsilon}) dx
\ee reaching 
\be 
\int_{\R^2} b\Lambda^{\alpha} \theta^{\epsilon} b\theta^{\epsilon} dx 
= \|\Lambda^{\frac{\alpha}{2}} (b\theta^{\epsilon})\|_{L^2}^2
+ \mathcal{A}_1 + \mathcal{A}_{2,1}.
\ee We note that the terms $\mathcal{A}_1$ and $\mathcal{A}_{2,1}$ involve commutators that can be dealt with using Proposition \ref{double} and Corollary \ref{coro} since the involved fractional powers are now in $(0,1)$. Indeed, $\mathcal{A}_{1}$ can be bounded using \eqref{prop15}  as follows, 
\be 
\begin{aligned}
&|\mathcal{A}_1| 
\le \|b\theta^{\epsilon}\|_{L^{2}}   \|b\Lambda^{\frac{\alpha}{2}} (\Lambda^{\frac{\alpha}{2}} \theta^{\epsilon}) - \Lambda^{\frac{\alpha}{2}} (b\Lambda^{\frac{\alpha}{2}}\theta^{\epsilon})\|_{L^2}
\\&\le C\| b\theta^{\epsilon}\|_{L^2} \left(r^2\|\theta^{\epsilon}\|_{L^1} + r^2 \|b\theta^{\epsilon}\|_{L^2} + Cr^{1-N}\|\theta^{\epsilon}\|_{L^2} + Cr\|\Lambda^{\frac{\alpha}{2}} (b\theta^{\epsilon})\|_{L^2} + Cr^{-N}\|\Lambda^{\frac{\alpha}{2}}\theta^{\epsilon}\|_{L^2}\right)
\\&\le \frac{1}{4}\|\Lambda^{\frac{\alpha}{2}}(b\theta^{\epsilon})\|_{L^2}^2 + Cr^2\|b\theta^{\epsilon}\|_{L^2}^2 + C\left(r^2 \|\theta^{\epsilon}\|_{L^1}^2 + r^{-2N} \|\theta^{\epsilon}\|_{L^2}^2 + r^{-2N-2}\|\Lambda^{\frac{\alpha}{2}}\theta^{\epsilon}\|_{L^2}^2  \right).
\end{aligned}
\ee for any $r > 0$. Choosing $r(t) = (t+1)^{-\frac{3}{4}}$ gives 
\be 
\begin{aligned}
&|\mathcal{A}_1| 
\le \|b\theta^{\epsilon}\|_{L^{2}}   \|b\Lambda^{\frac{\alpha}{2}} (\Lambda^{\frac{\alpha}{2}} \theta^{\epsilon}) - \Lambda^{\frac{\alpha}{2}} (b\Lambda^{\frac{\alpha}{2}}\theta^{\epsilon})\|_{L^2}
\le \frac{1}{4}\|\Lambda^{\frac{\alpha}{2}}(b\theta^{\epsilon})\|_{L^2}^2 + C(t+1)^{-\frac{3}{2}} \|b\theta^{\epsilon}\|_{L^2}^2 
\\&\quad\quad\quad\quad+ C\left( \|\theta^{\epsilon}\|_{L^1}^2 + (t+1)^{\frac{3N}{2}}\|\theta^{\epsilon}\|_{L^2}^2 + (t+1)^{\frac{3N}{2}+ \frac{3}{2}}\|\Lambda^{\frac{\alpha}{2}}\theta^{\epsilon}\|_{L^2}^2  \right).
\end{aligned}
\ee As for $\mathcal{A}_{2,1}$, it can be estimated using \eqref{prop14}, as in \eqref{trickyterm}, reaching
\be 
\begin{aligned}
|\mathcal{A}_{2,1}|
&\le  2(t+1)^{-\frac{3}{2}}\|b \theta^{\epsilon}\|_{L^2}^2 + C\|\theta^{\epsilon}\|_{L^1}^2 + C(t+1)^{3N+\frac{3}{2}}\|\theta^{\epsilon}\|_{L^2}^2.
\end{aligned}
\ee  As a consequence, we deduce the following differential inequality 
\be 
\begin{aligned}
&\frac{d}{dt} \|b\theta^{\epsilon}\|_{L^2}^2 \le \Gamma_{8} \|a\theta^{\epsilon}\|_{L^2}^2 + \Gamma_9(t+1)^{n_1} + C(t+1)^{n_2}\|\Lambda^{\frac{\alpha}{2}} \theta^{\epsilon}\|_{L^2}^2 
\\&
+ C\epsilon^2 (t+1)^{\frac{3}{2}} \left(\|\na (a\theta^{\epsilon})\|_{L^2}^2 +\|\theta^{\epsilon}\|_{L^2}^2 \right)
+  C(t+1)^{-\frac{3}{2}}\|b\theta^{\epsilon}\|_{L^2}^2,
\end{aligned}
\ee where $n_1$ and $n_2$ are positive constants depending on $\alpha$ and $\eta$. Integrating in time, using the bounds \eqref{ppnorm}, \eqref{atheta}, and \eqref{epsat} and the commutator estimate \eqref{molcom1}, and applying Gronwall's inequality, we infer that 
\be 
\|b\theta^{\epsilon}(t)\|_{L^2}^2 
\le \Gamma_{10}(1+t)^{m} 
\ee  for any $t \ge 0$, where $\Gamma_{10}$ is a positive constant that depends only on the initial data and $m$ is a positive constant that depend on $\alpha$ and $\eta$.  
\end{proof}

\section{Proof of Theorem \ref{t1}} \label{s4}

In this section, we prove Theorem \ref{t1}. The uniform $L^p$ estimates, uniform moment bounds, and uniform boundedness of the regularized solutions $\theta^{\epsilon}$ in $L^2(0, T; H^{\frac{\alpha}{2}}(\R^2))$ follow directly from Proposition \ref{propsec3}. 

The two-sided uniform relative entropy bounds are derived based on the following two nontrivial major steps:

{\bf{Step 1. Upper bounds.}} We multiply the regularized equation \eqref{moll} by $\log \theta^{\epsilon}$ and we integrate over $\R^2$. Several integrations by parts give rise to the following cancellation laws 
\be 
\int_{\R^2} \pa_t \theta^{\epsilon} dx = 0, \hspace{1cm} \int_{\R^2} J_{\epsilon} u^{\epsilon} \cdot \na \theta^{\epsilon} \log \theta^{\epsilon} = 0,
\ee and the dissipative identity 
\be 
-\epsilon \int_{\R^2} \Delta \theta^{\epsilon} \log \theta^{\epsilon} dx = 4\epsilon \|\na \sqrt{\theta^{\epsilon}}\|_{L^2}^2,
\ee yielding the energy evolution 
\be \label{Nin2}
\frac{d\mathcal{E}^{\epsilon} }{dt} + \kappa \int_{\R^2} \log \theta^{\epsilon} \Lambda^{\alpha}\theta^{\epsilon} dx + 4\epsilon \|\na \sqrt{\theta^{\epsilon}}\|_{L^2}^2= 0.
\ee Since $\frac{\theta^{\epsilon}}{\|\theta_0\|_{L^1}}dx$ is a probability measure and $\log x$ is concave, it follows from Jensen's inequality that 
\be 
\begin{aligned}
    \mathcal{E}^{\epsilon}
    = \|\theta_0\|_{L^1} \int_{\R^2} \frac{\theta^{\epsilon}}{\|\theta_0\|_{L^1}}\log \theta^{\epsilon} dx
    \le \|\theta_0\|_{L^1} \log \int_{\R^2} \frac{(\theta^{\epsilon})^2}{\|\theta_0\|_{L^1}} dx.
\end{aligned}
\ee In view of the continuous Sobolev embedding of $\dot{H}^{\frac{\alpha}{2}}$ into $L^{\frac{4}{2-\alpha}}$ and the uniform boundedness of $\theta^{\epsilon}$ in $L^{\frac{4}{\alpha}}$, we have
\be 
\begin{aligned}
\int_{\R^2} (\theta^{\epsilon})^2 dx 
&= \int_{\R^2} \sqrt{\theta^{\epsilon}} \sqrt{\theta^{\epsilon}} \theta^{\epsilon} dx
\le \|\sqrt{\theta^{\epsilon}}\|_{L^{\frac{4}{2-\alpha}}} \|\sqrt{\theta^{\epsilon}}\|_{L^2} \|\theta^{\epsilon}\|_{L^{\frac{4}{\alpha}}}
\\&\le \|\theta_0\|_{L^1}^{\frac{1}{2}}\|\theta_0\|_{L^{\frac{4}{\alpha}}} \|\Lambda^{\frac{\alpha}{2}}\sqrt{\theta^{\epsilon}}\|_{L^{2}}.
\end{aligned}
\ee Consequently, we obtain the fractional logarithmic inequality 
\be \label{fraclog}
\mathcal{E}^{\epsilon} \le \frac{\|\theta_0\|_{L^1}}{2}\log \left(\frac{\|\theta_0\|_{L^{\frac{4}{\alpha}}}^2 \|\Lambda^{\frac{\alpha}{2}}\sqrt{\theta^{\epsilon}}\|_{L^{2}}^2 }{\|\theta_0\|_{L^1}} \right).
\ee For each $\epsilon > 0$, we denote by $\mathcal{N}^{\epsilon}$ the following functional,
\be 
\mathcal{N}^{\epsilon} = \exp \left\{-2 \mathcal{E}^{\epsilon}/ \|\theta_0\|_{L^1} \right\}.
\ee On the one hand, it follows from \eqref{fraclog} that $\mathcal{N}^{\epsilon}$ obeys the inequality 
\be \label{inn3}
\mathcal{N}^{\epsilon} \|\Lambda^{\frac{\alpha}{2}} \sqrt{\theta^{\epsilon}}\|_{L^2}^2 \ge \frac{\|\theta_0\|_{L^1}}{\|\theta_0\|_{L^{\frac{4}{\alpha}}}^2}. 
\ee On the other hand, differentiating $\mathcal{N}^{\epsilon}$ with respect to time gives
\be \label{Nin}
\frac{d\mathcal{N}^{\epsilon}}{dt} = -\frac{2}{\|\theta_0\|_{L^1}} \mathcal{N}^{\epsilon} \frac{d\mathcal{E}^{\epsilon}}{dt}.
\ee Substituting the time derivative of $\mathcal{E}^{\epsilon}$ by its expression \eqref{Nin2}, we deduce that 
\be 
\begin{aligned}
    \frac{d\mathcal{N}^{\epsilon}}{dt}  
    &= \frac{2}{\|\theta_0\|_{L^1}} \mathcal{N}^{\epsilon} \left(4\epsilon \|\na \sqrt{\theta^{\epsilon}}\|_{L^2}^2 + \kappa \int_{\R^2} \log \theta^{\epsilon}\Lambda^{\alpha} \theta^{\epsilon} dx\right) 
    \\&\ge \frac{2}{\|\theta_0\|_{L^1}} \mathcal{N}^{\epsilon} \left(\kappa \int_{\R^2} \log \theta^{\epsilon}\Lambda^{\alpha} \theta^{\epsilon} dx\right).
\end{aligned}
\ee By the generalized Stroock-Varopoulos inequality (Theorem 2.3, \cite{stan2019existence}), the latter integral can be bounded by the fractional Fischer information as follows, 
\be 
\int_{\R^2} \log \theta^{\epsilon}\Lambda^{\alpha} \theta^{\epsilon} dx \ge 4\|\Lambda^{\frac{\alpha}{2}} \sqrt{\theta^{\epsilon}}\|_{L^2}^2.
\ee Hence, it holds that 
\be 
\frac{d\mathcal{N}^{\epsilon}}{dt} \ge \frac{8\kappa}{\|\theta_0\|_{L^1}} \mathcal{N}^{\epsilon} \|\Lambda^{\frac{\alpha}{2}} \sqrt{\theta^{\epsilon}}\|_{L^2}^2,
\ee which boils down to 
\be 
\frac{d\mathcal{N}^{\epsilon}}{dt} \ge 8\kappa \|\theta_0\|_{L^{\frac{4}{\alpha}}}^{-2}
\ee after making use of the \eqref{inn3}. We integrate in time from $0$ to $t$ and obtain 
\be 
\mathcal{N}^{\epsilon}(t) \ge \mathcal{N}^{\epsilon}(0) + 8\kappa \|\theta_0\|_{L^{\frac{4}{\alpha}}}^{-2}t.
\ee Applying the logarithm on both sides, we infer that 
\be 
\mathcal{E}^{\epsilon}(t) 
\le - \frac{\|\theta_0\|_{L^1}}{2} \log \left(\mathcal{N}^{\epsilon}(0) + \frac{8\kappa}{\|\theta_0\|_{L^{\frac{4}{\alpha}}}^2}t \right)
\ee for any $t \ge 0$. 
We note that 
\be 
\begin{aligned}
\mathcal{E}^{\epsilon}(0) 
&= \int_{\R^2} J_{\epsilon}\theta_0 \log J_{\epsilon}\theta_0 dx = \|\theta_0\|_{L^1} \int_{\R^2} \frac{J_{\epsilon}\theta_0}{\|\theta_0\|_{L^1}}\log J_{\epsilon}\theta_0 dx 
\\&\le \|\theta_0\|_{L^1} \log \left(\int_{\R^2} \frac{(J_{\epsilon}\theta_0)^2}{\|\theta_0\|_{L^1}} dx \right)
\le \|\theta_0\|_{L^1} \log  \frac{\|\theta_0\|_{L^2}^2}{\|\theta_0\|_{L^1}}
\end{aligned}
\ee by Jensen's inequality, so $\mathcal{N}^{\epsilon}(0)$ is uniformly bounded from below by 
\be 
\mathcal{N}^{\epsilon}(0) \ge  \exp \left\{-2 \log  \frac{\|\theta_0\|_{L^2}^2}{\|\theta_0\|_{L^1}} \right\} = \frac{\|\theta_0\|_{L^1}^2}{\|\theta_0\|_{L^2}^4}.
\ee Therefore, the regularized entropies are bounded from above by 
\be 
\mathcal{E}^{\epsilon}(t) 
\le - \frac{\|\theta_0\|_{L^1}}{2} \log \left(\frac{\|\theta_0\|_{L^1}^2}{\|\theta_0\|_{L^2}^4}+ \frac{8\kappa}{\|\theta_0\|_{L^{\frac{4}{\alpha}}}^2}t \right)
\ee for any $t \ge 0$.

{\bf{Step 2. Lower bounds.}} In order to derive uniform-in-$\epsilon$ bounds for the regularized entropies, we perform a splitting technique by which we decompose the domain of integration into two parts: $\left\{\theta^{\epsilon} < 1\right\}$ and $\left\{\theta^{\epsilon} \ge 1 \right\}$, as follows,
\be 
\mathcal{E}^{\epsilon} = \int_{\left\{\theta^{\epsilon} < 1\right\}} \theta^{\epsilon}\log \theta^{\epsilon} dx + \int_{\left\{\theta^{\epsilon} \ge 1\right\}} \theta^{\epsilon}\log \theta^{\epsilon} dx. 
\ee Over the domain $\left\{\theta^{\epsilon} \ge 1\right\}$, we have 
\be 
\left|\int_{\left\{\theta^{\epsilon} \ge 1\right\}} \theta^{\epsilon} \log \theta^{\epsilon}dx \right|
\le C\|\theta^{\epsilon}\|_{L^2}^2
\le C\|\theta_0\|_{L^2}^2
\ee in view of the algebraic inequality $|x\log x| \le Cx^2$ that holds for any $x \ge 1$. Now, over the domain $\left\{\theta^{\epsilon} < 1\right\}$, $|\log \theta^{\epsilon}| =  \log \frac{1}{\theta^{\epsilon}}$ and thus 
\be 
\begin{aligned}
\left|\int_{\left\{\theta^{\epsilon} < 1\right\}} \theta^{\epsilon} \log \theta^{\epsilon} dx\right| 
&\le \int_{\left\{\theta^{\epsilon} < 1\right\}} \theta^{\epsilon} \log \frac{1}{\theta^{\epsilon}} dx
= \frac{1}{1-p}\int_{\left\{\theta^{\epsilon} < 1\right\}} \theta^{\epsilon} \log \frac{1}{(\theta^{\epsilon})^{1-p}} dx
\\&\le \frac{1}{1-p}\int_{\R^2} \theta^{\epsilon} \log \left[ \frac{1}{(\theta^{\epsilon})^{1-p}} + \left(1 - \frac{1}{(\theta^{\epsilon})^{1-p}} \right) \chi_{\theta^{\epsilon} \ge 1} \right]dx,
\end{aligned}
\ee where $p$ is a real number lying in the interval $(\frac{1}{1+\eta}, 1)$ and $\chi_{\theta^{\epsilon }\ge1}$ is the characteristic function 
\be 
\chi_{\theta^{\epsilon} \ge 1}(x) = \begin{cases}
1, \hspace{1cm} \theta^{\epsilon}(x) \ge 1,\\ 0, \hspace{1cm} \theta^{\epsilon}(x) < 1. 
\end{cases}
\ee  An application of Jensen's inequality gives
\be 
\begin{aligned}
\left|\int_{\left\{\theta^{\epsilon} < 1\right\}} \theta^{\epsilon} \log \theta^{\epsilon} dx\right|
&\le \frac{1}{1-p}\|\theta_0\|_{L^1} \log \left[\frac{1}{\|\theta_0\|_{L^1}}\int_{\R^2} (\theta^{\epsilon})^{p} + \theta^{\epsilon} \left(1 - \frac{1}{(\theta^{\epsilon})^{p-1}} \right)\chi_{\theta^{\epsilon}\ge 1} dx\right]
\\&\le \frac{1}{1-p}\|\theta_0\|_{L^1} \log \left(1 +\frac{1}{\|\theta_0\|_{L^1}}\int_{\R^2} (\theta^{\epsilon})^{p} dx \right)
\end{aligned}
\ee By making use of the fractional moment bounds derived in Proposition \ref{propsec3}, we bound 
\be 
\int_{\R^2} (\theta^{\epsilon})^{p} dx
\le \left(\int_{\R^2} (\theta^{\epsilon})^2 (1+|x|^2)^{1+2\eta} dx \right)^{\frac{p}{2}} \left(\int_{\R^2} \frac{1}{(1+|x|^2)^{\frac{p(1+2\eta)}{2-p}}} dx \right)^{\frac{2-p}{2p}} \le \Gamma(1+t)^{\frac{mp}{2}}.
\ee In the last inequality, we used the fact that $\frac{p(1+2\eta)}{2-p}>1$.  Putting the above estimates together, we infer that 
\be 
|\mathcal{E}^{\epsilon}(t)|
\le A( 1 + \log (1+t))
\ee for any $t \ge 0$, where $A$ is a constant that depends only on the initial data. Therefore, we obtain the uniform lower bound
\be 
\mathcal{E}^{\epsilon}(t)  \ge - A(1+ \log (1+t)) 
\ee for any $t \ge 0$.

\section{Proof of Theorem \ref{t2}} \label{s5}

In this section, we present the proof of Theorem \ref{t2}. 
Let $T > 0$. The sequence of regularized solutions $\theta^{\epsilon}$ has a subsequence, denoted by $\theta^{\epsilon_k}$, that converges strongly in $L^2(0,T; L^2(\R^2))$ to a weak solution $\theta$ of \eqref{activescalar}, a fact that follows from the uniform bounds established in Theorem \ref{t1} and an application of the Aubin-Lions lemma. Moreover, we have 
\be 
\theta \in L^{\infty}(0, \infty; L^p(\R^2)) \cap L^2(0, \infty; \dot{H}^{\frac{\alpha}{2}}(\R^2))
\ee for any $p \ge 1$ due to the Banach-Alaoglu theorem and the lower semi-continuity of the norms. The above argument is classical and will be omitted. Here we focus on the convergence of the relative entropies. Indeed, we make use of the Vitali Convergence Theorem to show that the sequence of regularized entropies $\mathcal{E}^{\epsilon_k}$ has a subsequence that converges to $\mathcal{E}$ for almost every time $t \ge 0$, and passing to the limit, we deduce that 
\be 
-C_1 \log (C_2+C_3t) \le \mathcal{E}(t) \le - C_4 \log (C_5+C_6t)
\ee for any $t \ge 0$, where $C_1, \dots C_6$  are positive constants that depend only on the initial data. To this end,  we recall that $\theta^{\epsilon_k}$ converges to $\theta$ in $L^2(0,T; L^2(\R^2))$, and thus, it has a subsequence, that will be denoted by $\theta^{\epsilon_k}$ as well, that converges to $\theta$ in $L^2(\R^2)$ for a.e. $t \in [0, T]$. For such a fixed time $t$, define the sequence of functions
\be 
f_k(x) = \theta^{\epsilon_k}(x) \log \theta^{\epsilon_k}(x)
\ee on $\R^2$. First, the convergence of the sequence $\theta^{\epsilon_k}$ to $\theta$ in $L^2(\R^2)$ implies its convergence in measure. Since $\phi(s) = s\ log s$ is continuous for any $s \ge 0$, we deduce that $\phi(\theta^{\epsilon_k})$ converges to $\phi(\theta)$ by the continuous mapping theorem, and consequently, $f_k$ converges to $\theta \log\theta$ in measure. Second, the sequence $f_k$ is uniformly integrable in $L^1$ due to the uniform boundedness of the regularized entropies established in Theorem \ref{t1}. Finally, it holds that 
\be 
\int_{|x| \ge M} |f_k| dx
\le C\int_{|x| \ge M} \frac{1}{1+|x|^2}\left[(1+|x|^2)|\theta^{\epsilon_k}|^2 \right] dx
\le \frac{C\|\sqrt{1+|x|^2}\theta^{\epsilon_k}\|_{L^2}^2}{1+M^2} \le \frac{C\Gamma_t}{1+M^2}
\ee for any $M \ge 0$, and hence 
\be 
\lim\limits_{M \to \infty} \sup\limits_{k \in \N} \int_{|x| \ge M} |f_k| dx = 0. 
\ee As the conditions of the Vitali Convergence Theorem are satisfied, we infer that $f_k$ converges to $\theta \log \theta$ in $L^1(\R^2)$, ending the proof of Theorem \ref{t2}.

\vspace{0.5cm}

{\bf{Acknowledgments.}} E.A. was partially supported by the University Research Board (URB) of the American University of Beirut under Grant No. 104752. 

\vspace{0.5cm}

{\bf{Data Availability Statement.}} The research does not have any associated data.

\vspace{0.5cm}

{\bf{Conflict of Interest.}} The authors declare that they have no conflict of interest.

\bibliographystyle{plain}
\bibliography{ref}

\end{document}